\theoremstyle{definition}
\newtheorem{thm}{Theorem}[section]
\crefname{thm}{Theorem}{Theorems}
\newtheorem{cor}[thm]{Corollary}
\newtheorem{prop}[thm]{Proposition}
\crefname{prop}{Proposition}{Propositions}
\newtheorem{lem}[thm]{Lemma}
\crefname{lem}{Lemma}{Lemmas}
\newtheorem{conj}[thm]{Conjecture}
\crefname{defn}{Definition}{Definitions}
\newtheorem{rmk}[thm]{Remark}
\newtheorem*{ack*}{Acknowledgements}
\title{From Brunn-Minkowski to Pr\'ekopa-Leindler and Borell-Brascamp-Lieb: discrete inequalities.}
\author{Peter van Hintum}
\date{}							% Activate to display a given date or no date
\begin{document}
\maketitle
\begin{center}
     % push down a bit
    \textit{To Professor K\'aroly Bezdek, for his 70th birthday!}
    \vspace*{1cm}
\end{center}

\begin{abstract}
We consider a general way to obtain Pr\'ekopa-Leindler and Borell-Brascamp-Lieb type inequalities from Brunn-Minkowski type inequalities and provide numerous examples. We use the same heuristic to prove a discrete version of the Pr\'ekopa-Leindler and Borell-Brascamp-Lieb inequalities for functions over $\mathbb{Z}^d$. These are the functional extensions of the discrete Brunn-Minkowski inequality conjectured by Ruzsa and recently established by Keevash, Tiba, and the author.
\end{abstract}

\section{Introduction}
Establishing discrete counterparts to fundamental results from convex geometry and analysis is an active area of research, e.g. John's theorem \cite{fritz1948extremum,tao2006additive,tao2008john, berg2019discrete, van2024sharp}, Klartag and Lehec's Slicing Theorem (formerly Bourgain's Slicing Conjecture)\cite{klartag2025affirmative, alexander2017discrete, regev2016note,freyer2022bounds,freyer2024polynomial}, and the Brunn-Minkowski inequality \cite{bollobas1996sums,gardner2001brunn,ollivier2012curved,cifre2018discrete,boroczky2020triangulations,iglesias2020brunn,matolcsi2022analytic,green2022weighted,van2025ruzsa} and its stability \cite{figalli2021sharp,van2021sharp,van2023sets}. The reverse direction seems to have received less attention \cite{van2026locality}. In this note, we prove discrete versions of the Pr\'ekopa-Leindler and Borell-Brascamp-Lieb inequalities (\Cref{mainthm}) which can be seen as the functional versions of the recently established \cite{van2025ruzsa} discrete Brunn-Minkowski inequality as conjectured by Ruzsa \cite{ruzsa2006additive}.

To understand the context of the proof, we examine a rather general method of obtaining Borell-Brascamp-Lieb type inequalities from Brunn-Minkowski type inequalities. A range of applications of this method will be given. This method does not directly extend to the discrete context in which we wish to apply it, but will provide the underlying heuristic of the proof.

\subsection{Inequalities in Euclidean space}
The fundamental Brunn-Minkowski inequality from convex geometry relates the volume of sets $A,B\subset\mathbb{R}^d$ to the volume of its sumset $A+B:=\{a+b: a\in A, b\in B\}$ as
$$|A+B|^{1/d}\geq |A|^{1/d}+|B|^{1/d}.$$
Equivalently, one can normalise and consider $X,Y\subset\mathbb{R}^d$ of equal volume and a parameter $\lambda\in [0,1]$ to find
$$|\lambda X+(1-\lambda)Y|\geq |X|,$$
where $\lambda X:=\{\lambda x: x\in X\}$. This inequality has the following extension to functions called the Pr\'ekopa-Leindler inequality \cite{prekopa1971logarithmic}. Consider integrable functions $f,g,h\colon\mathbb{R}^d\to\mathbb{R}_{\geq 0}$ so that $\int_{\mathbb{R}^d}f(x)dx=\int_{\mathbb{R}^d}g(x)dx$ and for all $x,y\in\mathbb{R}^d$, we have $h(\lambda x+(1-\lambda) y)\geq f(x)^\lambda g(y)^{1-\lambda}$, then
$$\int_{\mathbb{R}^d}h(x)dx\geq \int_{\mathbb{R}^d}f(x)dx.$$

Borell \cite{borell1975convex} and independently Brascamp and Lieb \cite{brascamp1976extensions} showed that one can in fact replace the weighted geometric mean in the lower bound on $h$ by a slighlty smaller harmonic mean. To this end, for $p\in \mathbb{R},a,b\in\mathbb{R}_{\geq0}$, and $\lambda\in(0,1)$, let
$$M_{p,\lambda}(a,b):=\begin{cases} \left(\lambda a^{p}+(1-\lambda)b^{p}\right)^{1/p}&\text{ if }p,a,b\neq0\\
0 &\text{ if } ab=0\\
a^\lambda b^{1-\lambda} &\text{ if } p=0\end{cases}$$
be the $\lambda$-weighted $p$-mean. The Borell-Brascamp-Lieb inequality asserts that if (again) $f,g,h\colon\mathbb{R}^d\to\mathbb{R}_{\geq 0}$ are integrable functions so that $\int_{\mathbb{R}^d}f(x)dx=\int_{\mathbb{R}^d}g(x)dx$, but which satisfy the weaker constraint that $h(\lambda x+(1-\lambda) y)\geq M_{-1/d,\lambda}(f(x),g(y))$ for all $x,y\in\mathbb{R}^d$, then still
$$\int_{\mathbb{R}^d}h(x)dx\geq \int_{\mathbb{R}^d}f(x)dx.$$

\subsection{Discrete Inequalities}
In \cite{van2025ruzsa}, the author with Keevash and Tiba established the following approximate Brunn-Minkowski inequality in the integers conjectured by Ruzsa \cite{ruzsa2006additive} using the technical framework from \cite{van2026locality}.
\begin{thm}[\cite{van2025ruzsa}]\label{ruzsaBM}
For all $d,\epsilon>0$, there exists a $n=n_{d,\epsilon}$ so that if $A,B\subset\mathbb{Z}^d$ are so that $B$ is not covered by $n$ parallel hyperplanes, then 
$$|A+B|^{1/d}\geq |A|^{1/d}+(1-\epsilon)|B|^{1/d}.$$
\end{thm}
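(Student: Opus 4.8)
The plan is to run a discrete analogue of the proof of the continuous Brunn--Minkowski inequality, paying the unavoidable cost of discretisation out of the slack $\epsilon\,|B|^{1/d}$; the hypothesis that $B$ meets more than $n$ parallel hyperplanes in every direction is exactly what makes this cost affordable. Some reductions are immediate: it suffices to treat finite sets, and we may assume $|A|\ge|B|$ so that the lossy factor $(1-\epsilon)$ is attached to the smaller set. Since $B$ is not covered by $n$ parallel hyperplanes, in each direction $v$ the functional $\langle\cdot,v\rangle$ takes more than $n$ integer values on $B$, which already forces $|B|>n$ and forces the lattice width of $\operatorname{conv}(B)$ to be at least $n$ in every direction; choosing $n\gg_d\epsilon^{-d}$ we thus get $\epsilon|B|^{1/d}\gg1$ and, more importantly, $\operatorname{conv}(B)$, hence $\operatorname{conv}(A+B)=\operatorname{conv}(A)+\operatorname{conv}(B)$, is ``thick'' — robustly full-dimensional — which is what will let lattice-point counts be compared to volumes. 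The same hypothesis is also necessary: if $B$ lies in a single hyperplane, or is a point, the inequality is simply false, so some robustly-full-dimensional assumption on $B$ cannot be dropped.

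In the model regime — $A$ and $B$ close to their lattice convex hulls $\widehat A=\operatorname{conv}(A)\cap\mathbb Z^d$, $\widehat B=\operatorname{conv}(B)\cap\mathbb Z^d$, so that $|A|=(1+o_\epsilon(1))|\widehat A|$ and likewise for $B$ — one argues that $A+B$ fills almost all of $\operatorname{conv}(A+B)\cap\mathbb Z^d$, counts lattice points against volume (the discrepancy being of boundary order and hence negligible against the gain once $A+B$ is thick), and feeds this into the continuous bound $\operatorname{vol}(\operatorname{conv}(A)+\operatorname{conv}(B))^{1/d}\ge\operatorname{vol}(\operatorname{conv}(A))^{1/d}+\operatorname{vol}(\operatorname{conv}(B))^{1/d}$, carefully accounting for the gap between $\operatorname{vol}(\operatorname{conv}(X))$ and $|\widehat X|$ using the extra lattice points the thick sumset gains near its boundary. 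The substantive difficulty is the complementary regime, where $A$ or $B$ has a large non-convexity defect $|\widehat A|-|A|$ or $|\widehat B|-|B|$, since then an individual point of $A$ can add almost nothing to $A+B$ and the above comparison collapses. The classical remedy is induction on the dimension $d$ — slice $A$ and $B$ along a direction into fibres $A_t,B_s\subset\mathbb Z^{d-1}$, use $(A+B)_{t+s}\supseteq A_t+B_s$ together with $|X+Y|\ge|X|+|Y|-1$ on the index sets and the $(d-1)$-dimensional case on the fibres, and recombine by an arithmetic--geometric mean step as in the proof of Hadwiger and Ohmann. Carrying this out in $\mathbb Z^d$ is precisely where errors proliferate: the ``$-1$'', fibres that are themselves too low-dimensional for the inductive hypothesis to apply, and the impossibility of matching mass levels continuously all threaten to compound across the $d$ recursion steps and swamp $\epsilon|B|^{1/d}$; this is also why the known non-sharp discrete estimates lose constants.

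To keep these errors under control I would invoke the locality framework of \cite{van2023locality}: instead of pushing the dimension induction through globally, one uses it to \emph{reduce} the theorem to a local statement — the inequality, with a marginally better constant, under the extra hypothesis that $A$ and $B$ are already close, in a quantitative measure-theoretic sense, to homothetic copies of a common convex body — the general case then following by a continuity/compactness argument that tracks how the Brunn--Minkowski deficit behaves under the slicing and selection operations. In this local regime the non-convexity defects are automatically small, so the comparison of the second paragraph applies and the induction only ever has to close a tiny gap, keeping the accumulated error below $\epsilon|B|^{1/d}$. I expect the main obstacle to be exactly the bookkeeping in this reduction — propagating the dependence of $n=n_{d,\epsilon}$ through the dimension induction and through the local-to-global passage, and checking that the ``not covered by $n$ hyperplanes'' hypothesis on $B$ is inherited, in a weakened but still usable form, by the fibres and by the near-convex pieces produced by the decomposition — and it is here that the technical machinery of \cite{van2023locality} does the heavy lifting.
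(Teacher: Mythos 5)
This statement is not proved in the paper at all: it is Theorem~\ref{ruzsaBM}, which the paper quotes as a black box from \cite{van2025ruzsa} and then builds on. There is therefore no in-paper proof to compare against; the only thing the paper tells us about the argument is the one-line remark that \cite{van2025ruzsa} proves it ``using the technical framework from \cite{van2023locality}.''

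Your proposal correctly identifies the ingredients one expects — comparison of $A,B$ with their lattice convex hulls, lattice-point-vs-volume counting once the sumset is thick, Hadwiger--Ohmann-style slicing and dimension induction for the non-convex regime, and the locality framework of \cite{van2023locality} to keep the accumulated slicing errors below $\epsilon|B|^{1/d}$. But it is a plan, not a proof, and the gap is exactly where you flag it yourself: everything in paragraphs two and three is conditional on ``the technical machinery of \cite{van2023locality} does the heavy lifting,'' without showing how the local-to-global reduction is set up, what the local statement precisely is, or how the hyperplane-covering hypothesis on $B$ survives the slicing into fibres. That reduction is the entire content of \cite{van2025ruzsa}; deferring it wholesale is deferring the theorem. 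Two smaller points also need care. First, the reduction ``we may assume $|A|\ge|B|$'' is not free here: the hyperplane hypothesis and the $(1-\epsilon)$ loss are both attached to $B$, so the statement is genuinely asymmetric in $A$ and $B$, and swapping them changes which set must be robustly full-dimensional. Second, the claim $n\gg_d\epsilon^{-d}$ as the needed quantitative dependence is optimistic; nothing in your outline pins down any explicit dependence, and the induction-plus-locality route is exactly the kind of argument that produces far worse (and, importantly for the application in this paper, nonexplicit) bounds on $n_{d,\epsilon}$. So: right toolbox, correct attribution of where the difficulty lives, but no mechanism supplied for closing the central reduction.
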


The aim of this note will be to prove the following functional extension of this result in the spirit of the Borell-Brascamp-Lieb inequality. For a function $f\colon\mathbb{Z}^d\to\mathbb{R}_{\geq0}$ and $X\subset\mathbb{Z}^d$, we write $\sum_X f$ for $\sum_{x\in X} f(x)$ and $\sum f$ for $\sum_{x\in \mathbb{Z}^d} f(x)$. 

\begin{thm}\label{mainthm}
Let $p\in(0,1/d)$,  $f,g,h\colon\mathbb{Z}^d\to\mathbb{R}_{\geq0}$ so that $\sum f=\sum g$, for all $x,y\in\mathbb{Z}^d$, $h(x+y)\geq M_{-p,\frac12}(f(x),g(y))$, and for every $n=n_{d,\epsilon,p}$ parallel hyperplanes $H_1,\dots, H_n$, we have $\sum_{\bigcup_i H_i}f\leq \left(1-2^{d-\frac1p}\right)\sum f$.
Then
$$\sum h\geq (2^d-\epsilon)\sum f$$
\end{thm}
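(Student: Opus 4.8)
The plan is to imitate, in $\mathbb{Z}^d$, the layer-cake derivation of Borell--Brascamp--Lieb from Brunn--Minkowski, using \Cref{ruzsaBM} in place of the latter. I would write $\sum h=\int_0^\infty|\{h\ge u\}|\,du$, and similarly for $f$ and $g$, where $\{h\ge u\}=\{x\in\mathbb{Z}^d:h(x)\ge u\}$. Since $a,b\mapsto M_{-p,\frac12}(a,b)$ is non-decreasing in each variable, the hypothesis $h(x+y)\ge M_{-p,\frac12}(f(x),g(y))$ gives the superlevel-set inclusion $\{h\ge M_{-p,\frac12}(s,t)\}\supseteq\{f\ge s\}+\{g\ge t\}$ for all $s,t>0$. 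On these sumsets I would use two bounds: the Cauchy--Davenport-type bound $|\{f\ge s\}+\{g\ge t\}|\ge|\{f\ge s\}|+|\{g\ge t\}|-1$, valid in $\mathbb{Z}^d$; and, whenever $\{f\ge s\}$ is not covered by $n$ parallel hyperplanes, the sharp $|\{f\ge s\}+\{g\ge t\}|^{1/d}\ge|\{g\ge t\}|^{1/d}+(1-\epsilon)|\{f\ge s\}|^{1/d}$ from \Cref{ruzsaBM} (with $\{f\ge s\}$ playing the role of the non-degenerate set).

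Next I would localise the degeneracy of $f$. Since $\{f\ge s\}$ decreases in $s$, the set of $s$ for which $\{f\ge s\}$ is covered by $n$ parallel hyperplanes is of the form $(s^*,\infty)$, and $s^*>0$ (otherwise $\operatorname{supp}f$ itself lies on $n$ parallel hyperplanes, which the hypothesis forbids). As $\sum f<\infty$ and $s^*>0$, the set $\{f>s^*\}$ is finite, hence equals $\{f\ge s\}$ for all $s$ close enough to $s^*$, hence is covered by $n$ parallel hyperplanes; the hypothesis then yields $\sum_{\{f>s^*\}}f\le(1-2^{d-1/p})\sum f$. In particular $\int_{s^*}^\infty|\{f\ge s\}|\,ds\le(1-2^{d-1/p})\sum f$, so the ``good'' levels $s\in(0,s^*]$ — on which the sharp bound above is available — carry mass $\sum_x\min(f(x),s^*)\ge 2^{d-1/p}\sum f$, while the ``tall'' part of $f$ accounts for the rest.

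The heart of the proof is a change of variables. Passing to the decreasing rearrangements $a(v)=\sup\{s:|\{f\ge s\}|\ge v\}$ and likewise $b,c$ for $g,h$ (so $\int_0^\infty a=\int_0^\infty b=\sum f$, $\int_0^\infty c=\sum h$), the two sumset bounds become $c(((1-\epsilon)v_1^{1/d}+v_2^{1/d})^d)\ge M_{-p,\frac12}(a(v_1),b(v_2))$ for good $v_1$ and arbitrary $v_2$, and $c(v_1+v_2-1)\ge M_{-p,\frac12}(a(v_1),b(v_2))$ in general. I would then bound $\sum h=\int_0^\infty c(w)\,dw$ by integrating along a one-parameter family $(v_1(r),v_2(r))$ chosen to match levels of $f$ with levels of $g$: on the bulk of the mass one follows the \Cref{ruzsaBM} branch, where the $(1-\epsilon)$ distortion costs only $O(\epsilon)$ and the $d$-th power produces the factor $2^d$; on the tall part of $f$ one uses instead that $M_{-p,\frac12}(a,b)\to 2^{1/p}b$ as $a/b\to\infty$ (this is where $p>0$ enters), and the strict inequality $p<1/d$, i.e.\ $2^{1/p}>2^d$, is exactly the slack needed for the tall part to make up for the mass it withdraws from the Brunn--Minkowski branch — the threshold $1-2^{d-1/p}$ being the break-even amount. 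Choosing $n=n_{d,\epsilon,p}$ large enough to invoke \Cref{ruzsaBM} and to absorb the $O(1)$-per-level discreteness errors then yields $\sum h\ge(2^d-\epsilon)\sum f$.

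I expect this last step to be the main obstacle: carrying out the change of variables while simultaneously handling the multiplicative $(1-\epsilon)$ loss, the degenerate superlevel sets of $f$ (where only the weak Cauchy--Davenport bound survives), and the passage from sums to integrals, all without degrading the leading constant $2^d$. This is precisely where the continuous heuristic does not transfer verbatim, and it is what forces the quantitative shape $1-2^{d-1/p}$ of the non-concentration hypothesis; a lesser technical point is making the rearrangement bookkeeping rigorous when $f$ has infinite support.
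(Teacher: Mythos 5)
The proposal fails at the combination step, and the failure is structural rather than a missing estimate: a single transport curve $(v_1(r),v_2(r))$ cannot recover the constant $2^d$ because it matches tall levels of $f$ with tall levels of $g$, whereas the theorem's strength (and tightness) comes from crossing tall levels of one function against \emph{short} levels of the other. Concretely, your claim that on the tall part of $f$ one has $M_{-p,\frac12}(a,b)\to 2^{1/p}b$ requires $a/b\to\infty$; but the transport map forces $a=a(v_1)$ and $b=b(v_2)$ to be matched so that they need not separate. Take $f=g=\tfrac{1}{2N^{d-1}}\mathbf{1}_{\{0\}\times[N]^{d-1}}+\tfrac{1}{2N^d}\mathbf{1}_{[N]^d}$ with $N$ large (which satisfies the hypothesis once $2^{d-1/p}\le\tfrac12$, i.e.\ $\tfrac1p\ge d+1$): here $T$ is the identity, $|F_{T(t)}|=|G_t|$ at every level, $a/b\equiv 1$, the good levels carry exactly $g$-mass $\tfrac12$, and the degenerate levels carry the other $\tfrac12$. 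Your two branches then give at most $2^d\cdot\tfrac12+2\cdot\tfrac12=2^{d-1}+1<2^d$ for $d\ge 2$ (even upgrading Cauchy--Davenport to the optimal $(d-1)$-dimensional bound only raises this to $\tfrac34\cdot 2^d$). The true $\sum h$ is nevertheless $\gtrsim 2^{d-2+1/p}>2^d$, with the dominant contribution coming from $x$ in the heavy hyperplane slice of $f$ paired against $y$ in the light bulk of $g$ — a pairing your curve never visits.

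The paper circumvents exactly this issue by a different case split and a different degenerate-case mechanism. It conditions on the degeneracy of the \emph{$g$} superlevel sets (not $f$'s), applying \Cref{ruzsaBM} with $B=G_t$; when $g$'s level sets go degenerate early, it abandons the transport framework entirely. It isolates one hyperplane $H$ with $\sum_{H}g\gtrsim\eta/m$, slices $f$ along the pencil of hyperplanes parallel to $H$, applies the one-dimensional sup-convolution bound (\Cref{trivialLB}) to each slice $H+H_i$, and then invokes the convexity estimate (\Cref{betaconvex}) together with the non-concentration hypothesis on $f$. This slicewise argument is precisely the ``cross-matching'' your one-parameter curve cannot perform, and it is what produces $2^{d-1/p}\cdot 2^{1/p}=2^d$. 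You partly anticipate the difficulty when you flag the combination step as the main obstacle, but the obstacle is not a bookkeeping issue to be absorbed by choosing $n$ large: the transport-map parametrisation itself is insufficient, and a genuinely two-dimensional (hyperplane-by-hyperplane) argument, of the sort the paper runs in its second case, is required.
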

As $p\to 0$, this implies the following Pr\'ekopa-Leindler inequality

\begin{cor}
Let $f,g,h\colon\mathbb{Z}^d\to\mathbb{R}_{\geq0}$ so that $\sum f=\sum g$, for all $x,y\in\mathbb{Z}^d$, $h(x+y)\geq \sqrt{f(x)g(y)}$, and for every $n=n_{d,\epsilon,\alpha}$ parallel hyperplanes $H_1,\dots, H_n$, we have $\sum_{\bigcup_i H_i}f\leq \left(1-\alpha\right)\sum f$.
Then
$$\sum h\geq (2^d-\epsilon)\sum f.$$
\end{cor}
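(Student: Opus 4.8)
The plan is to deduce the corollary directly from \Cref{mainthm} by fixing one sufficiently small value of $p$, rather than by literally passing to a limit. The only analytic input needed is the elementary comparison of means: for $a,b\in\mathbb{R}_{\geq0}$ and any $p>0$ one has $M_{-p,\frac12}(a,b)\leq\sqrt{ab}$. If $ab=0$ both sides vanish by definition of $M_{\cdot,\cdot}$; if $a,b>0$ this is the standard monotonicity of the power mean $q\mapsto M_{q,\frac12}(a,b)$ in the exponent $q$, applied with $-p<0<0$'s role being the geometric mean $M_{0,\frac12}(a,b)=\sqrt{ab}$. Hence the hypothesis $h(x+y)\geq\sqrt{f(x)g(y)}$ of the corollary implies $h(x+y)\geq M_{-p,\frac12}(f(x),g(y))$ for every $p>0$ and all $x,y\in\mathbb{Z}^d$, so the multiplicative constraint demanded by \Cref{mainthm} is automatically satisfied for any admissible choice of $p$.

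Next I would fix the parameter. Given $d$ and $\alpha>0$ (we may assume $\alpha\leq1$, as otherwise the hyperplane hypothesis forces $f$ to vanish off a bounded union of hyperplanes and the claim is trivial), choose $p=p(d,\alpha)\in(0,1/d)$ small enough that $2^{d-1/p}\leq\alpha$; this is possible since $2^{d-1/p}\to0$ as $p\to0^+$ while eventually $p<1/d$. Now set $n_{d,\epsilon,\alpha}:=n_{d,\epsilon,p(d,\alpha)}$, the quantity provided by \Cref{mainthm} for this $p$. With this choice, the hyperplane hypothesis of the corollary gives, for every collection of $n_{d,\epsilon,p}$ parallel hyperplanes $H_1,\dots,H_n$,
$$\sum_{\bigcup_i H_i}f\;\leq\;(1-\alpha)\sum f\;\leq\;\bigl(1-2^{d-1/p}\bigr)\sum f,$$
which is precisely the hyperplane hypothesis of \Cref{mainthm}. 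Together with $\sum f=\sum g$ and the mean comparison above, all hypotheses of \Cref{mainthm} hold for this $p$, and it yields $\sum h\geq(2^d-\epsilon)\sum f$, as required.

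There is no real obstacle here: essentially all the content is already in \Cref{mainthm}. The only points deserving a line of care are the degenerate case $ab=0$ in the mean comparison and the observation that it is legitimate to define the constant $n_{d,\epsilon,\alpha}$ in terms of the $p$-dependent constant $n_{d,\epsilon,p}$ once $p=p(d,\alpha)$ has been pinned down. (If one prefers to phrase the deduction as an actual limit $p\to0$, one would additionally note the monotone convergence $M_{-p,\frac12}(a,b)\uparrow\sqrt{ab}$ as $p\downarrow0$, but this is not needed for the argument above.)
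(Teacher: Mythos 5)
Your proof is correct and matches the paper's (unwritten) argument: the paper simply asserts that the corollary follows from \Cref{mainthm} "as $p\to 0$," and your version is a clean way to make that precise by fixing a single $p=p(d,\alpha)\in(0,1/d)$ with $2^{d-1/p}\leq\alpha$, invoking power-mean monotonicity to verify the pointwise hypothesis on $h$, and setting $n_{d,\epsilon,\alpha}:=n_{d,\epsilon,p(d,\alpha)}$. The only small imprecision is the remark about $\alpha>1$: in that case the hyperplane condition forces $\sum f=0$ (hence $f\equiv0$), not merely that $f$ vanishes off a bounded union of hyperplanes, but the conclusion that the case is trivial is correct either way.
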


The non-degeneracy condition in \Cref{mainthm} is needed and asymptotically optimal in the following sense. Consider $g=\textbf{1}_{o}$ and $f=(1-\gamma)\textbf{1}_{o}+\frac{\gamma}{N^d}\textbf{1}_{[N]^d}$, where $[N]^d=\{0,\dots,N-1\}^d$ and $N$ arbitrarily large. Note that 
$$M_{-p,1/2}\left(1,\frac{\gamma}{N^d}\right)=\left(\frac{2}{1+\left(\frac{N^d}{\gamma}\right)^{p}}\right)^{1/p}<\left(\frac{2}{\left(\frac{N^d}{\gamma}\right)^{p}}\right)^{1/p} =2^{1/p}\frac{\gamma}{N^d}.$$
Hence, if we let $h=\textbf{1}_{o}+ 2^{1/p}\frac{\gamma}{N^d}\textbf{1}_{[N]^d}$, then $h(x+y)\geq M_{-p,\frac12}(f(x),g(y))$ and $\sum h= 1+ 2^{1/p}\gamma$ while $n$ hyperplanes contain at most a $1-\gamma+ O_{\gamma,d}(\frac{n}{N})$ proportion of $f$. To find the conclusion, we thus need $2^{1/p}\gamma+1\geq 2^d-\epsilon$.

 Some discrete version of the Pr\'ekopa-Leindler inequality have been proved previously in the style of the Ahlswede–Daykin inequality (or Four Function Theorem)
\cite{klartag2019poisson,gozlan2021transport,halikias2021discrete}. Others use rounding to get strong enough bounds on $h$ to prove Pr\'ekopa-Leindler type inequalities \cite{iglesias2020brunn,marsiglietti2024geometric}. Strong as these results are, the inequality presented here stays closer to the spirit of the Pr\'ekopa-Leindler inequality. In \cite{iglesias2020discrete}, Iglesias and Yepes establish a Borell-Brascamp-Lieb inequality in $\mathbb{Z}^n$ providing a lower bound based on removing the largest hyperplane sections of $f$.

\subsection{From Brunn-Minkowski to Borell-Brascamp-Lieb}
There are many proofs of the Borell-Brascamp-Lieb inequality and even more of the Pr\'ekopa-Leindler inequality. The approach presented here provides a general method to use Brunn-Minkowski inequalities to prove Borell-Brascamp-Lieb inequalities. The basic idea is to apply the Brunn-Minkowski inequality to pairs of level sets of the functions in question and to use an optimal transport map to find an appropriate pairing. This approach has been present in the field for some time in varying degrees of explicitness. Recently, Malliaris, Melbourne, Roberto, and Roysdon \cite{malliaris2025functional} explored the connection between these inequalities through an approach similar to \Cref{liftprop} below. In a short note, Cordero-Erausquin \cite{Cordero2025} clarified this connection further and provided some historical notes and simple proofs.

Consider domains $\Omega_1,\Omega_2,\Omega_3$ with a binary operation $S\colon \Omega_1\times \Omega_2\to\Omega_3$ and with a measures $\mu_1,\mu_2,\mu_3$ which allow a Brunn-Minkowski inequality, in the following sense. For non-empty measurable $A\subset \Omega_1$ and $B\subset\Omega_2$, write $S(A,B):=\{S(a,b)\in \Omega_3:a\in A,b\in B\}$ and assume $S(A,B)$ is also measurable, then we have:
\begin{equation}\label{pBM}
 \mu_3(S(A,B))^p\geq \lambda\mu_1(A)^p+(1-\lambda)\mu_2(B)^p   
\end{equation}
for some $p>0$ and $\lambda\in (0,1)$, or equivalently $\mu_3(S(A,B))\geq M_{p,\lambda}(\mu_1(A),\mu_2(B))$.

The traditional Brunn-Minkowski inequality is recovered for $\Omega_1=\Omega_2=\Omega_3=\mathbb{R}^d$, $S(x,y)=\lambda x+ (1-\lambda)y$, $p=1/d$, and $\mu_1=\mu_2=\mu_3$ is the Lebesgue measure. In most applications, we'll have $\Omega_1=\Omega_2=\Omega_3$ and $\mu_1=\mu_2=\mu_3$.

Operations and measures satisfying \eqref{pBM} allow a Borell-Brascamp-Lieb inequality of the following type. 
\begin{prop}\label{liftprop}
   Let $\Omega_1,\Omega_2,\Omega_3,$ $S$, $\mu_1,\mu_2,\mu_3$, $\lambda$, and $p$ satisfy \eqref{pBM}. Let $f\colon \Omega_1\to\mathbb{R}_{\geq0}$ be $\mu_1$-integrable, $g\colon \Omega_2\to\mathbb{R}_{\geq0}$ be $\mu_2$-integrable, and $h\colon \Omega_3\to\mathbb{R}_{\geq0}$ be $\mu_3$-integrable so that $\int_{\Omega_1} fd\mu_1=\int_{\Omega_2} g d\mu_2$ and so that for all $x\in \Omega_1,y\in\Omega_2$, we have $h(S(x,y))\geq M_{-p,\lambda}(f(x),g(y))$. Then $\int_{\Omega_3} h d\mu_3\geq \int_{\Omega_1} fd\mu_1$.
\end{prop}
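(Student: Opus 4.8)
The plan is to use the classical layer-cake (distribution function) representation of integrals together with the Brunn--Minkowski hypothesis \eqref{pBM} applied to superlevel sets. For a nonnegative function $f$ on $\Omega_1$, write $\{f > t\} := \{x \in \Omega_1 : f(x) > t\}$, and similarly for $g$ and $h$; then $\int_{\Omega_1} f \, d\mu_1 = \int_0^\infty \mu_1(\{f>t\})\, dt$ and likewise for the others. The key geometric observation is that the pointwise hypothesis $h(S(x,y)) \geq M_{-p,\lambda}(f(x),g(y))$ forces an inclusion of superlevel sets: if $x \in \{f > a\}$ and $y \in \{g > b\}$, then $h(S(x,y)) \geq M_{-p,\lambda}(a,b)$, so $S(\{f>a\},\{g>b\}) \subseteq \{h > M_{-p,\lambda}(a,b)\}$. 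Thus, for any $s > 0$, choosing $a$ and $b$ with $M_{-p,\lambda}(a,b) \geq s$ yields $\mu_3(\{h>s\}) \geq \mu_3(S(\{f>a\},\{g>b\}))$, and as long as both superlevel sets are nonempty we may apply \eqref{pBM} to get $\mu_3(\{h>s\})^p \geq \lambda \mu_1(\{f>a\})^p + (1-\lambda)\mu_2(\{g>b\})^p$.

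Next I would parametrize the level sets of $f$ and $g$ by a common variable. Since $\int f \, d\mu_1 = \int g\, d\mu_2$, I introduce $u \in [0, \int f\, d\mu_1)$ and define $a(u)$ (resp. $b(u)$) so that $\mu_1(\{f > a(u)\}) = $ (the relevant reparametrization); concretely, it is cleanest to set $m(u) := \mu_1(\{f>a(u)\})$ where $a,b$ are chosen so that the quantile levels agree, i.e. work with the generalized inverses of the distribution functions $t \mapsto \mu_1(\{f>t\})$ and $t \mapsto \mu_2(\{g>t\})$. Write $a(u)$ and $b(u)$ for these inverses evaluated at $u$, so that by the layer-cake identity, $\int_{\Omega_1} f\, d\mu_1 = \int_0^{\infty} \mu_1(\{f>t\})\,dt$ can be rewritten as an integral over $u$. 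For each $u$, set $s(u) := M_{-p,\lambda}(a(u), b(u))$. Then by the inclusion above and \eqref{pBM},
\begin{equation*}
\mu_3(\{h > s(u)\})^p \geq \lambda \mu_1(\{f>a(u)\})^p + (1-\lambda)\mu_2(\{g>b(u)\})^p.
\end{equation*}
Now $M_{-p,\lambda}(a,b)$ and $M_{p,\lambda}$ are "conjugate" in exactly the way needed: the definition of $s(u)$ gives $s(u)^{-p} = \lambda a(u)^{-p} + (1-\lambda) b(u)^{-p}$, and combining this with the displayed inequality via the superadditivity/concavity bridging $M_{-p}$ on the level values and $M_p$ on the measures is the crux. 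One then integrates $\mu_3(\{h>s\})\, ds \geq \int h\, d\mu_3$ over the range of $s(u)$, changing variables from $s$ to $u$ and bounding the Jacobian.

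The main obstacle is the change-of-variables / integration step where one passes from the pointwise superlevel-set inequality back to an inequality between the full integrals: one needs to check that as $u$ ranges over $[0,\int f\,d\mu_1)$, the substitution $u \mapsto s(u)$ together with the bound $\mu_3(\{h>s(u)\})^p \geq \lambda \mu_1(\{f>a(u)\})^p + (1-\lambda)\mu_2(\{g>b(u)\})^p$ integrates up correctly. The clean way to organize this is the standard one for Borell--Brascamp--Lieb: reduce first to the case $\int f\, d\mu_1 = \int g\, d\mu_2 = 1$ by scaling (rescaling $f,g$ and $h$ by the same constant, which is compatible with $M_{-p,\lambda}$ since that mean is homogeneous of degree $1$), so that $a(u), b(u)$ are the quantile functions of probability densities; then use that $\int_{\Omega_3} h\, d\mu_3 \geq \int_0^1 \mu_3(\{h > s(u)\})\, |s'(u)|\, du$ after a monotone substitution, and finally invoke the elementary one-dimensional inequality that for the $(-p)$-mean of the levels against the $p$-mean of the measures, the product rule gives $\mu_3(\{h>s(u)\}) \cdot |s'(u)| \geq$ the corresponding combination for $f$ and $g$, so the integral is $\geq \int_{\Omega_1} f\, d\mu_1$. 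I expect the care needed is entirely in (i) handling the degenerate cases where a superlevel set is empty (where $M_{-p,\lambda}$ is defined to be $0$, so the hypothesis is vacuous and the inclusion still holds trivially), (ii) justifying the monotone change of variables when the distribution functions have flat pieces or jumps, and (iii) the homogeneity bookkeeping in the reduction to the normalized case; the core inequality chain itself is forced by \eqref{pBM}.
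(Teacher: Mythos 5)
Your overall strategy is the same as the paper's: write the three integrals by the layer-cake formula, observe that the pointwise hypothesis forces $S(F_a,G_b)\subseteq H_{M_{-p,\lambda}(a,b)}$ so that \eqref{pBM} applies to the superlevel sets, then parametrize the levels of $f$ and $g$ by a common variable and close the argument with a change of variables plus a one-dimensional ``product-rule'' inequality. That is exactly how the paper proceeds, with the paper's transport map $T$ playing the role of your pair $(a(u),b(u))$ and the paper's inequality \eqref{meanderiv} playing the role of your ``elementary one-dimensional inequality.''

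There is, however, a real ambiguity in your parametrization that, resolved one way, breaks the proof. You describe $a(u),b(u)$ both as ``the generalized inverses of the distribution functions $t\mapsto\mu_1(\{f>t\})$ and $t\mapsto\mu_2(\{g>t\})$'' and as ``the quantile functions of probability densities.'' These are \emph{different} objects. The first phrase is the decreasing-rearrangement parametrization, $\mu_1(F_{a(u)})=u$, $\mu_2(G_{b(u)})=u$; with that choice \eqref{pBM} only yields $\mu_3(H_{s(u)})\geq u$, and the resulting chain $\int h\geq\int u\,|s'(u)|\,du$ does \emph{not} dominate $\int f=\int a(u)\,du$ in general (try $f=2\cdot\mathbf{1}_E$ with $\mu_1(E)=1/2$, $g=\mathbf{1}_F$ with $\mu_2(F)=1$: one loses a factor tending to $\sqrt2$ as $p\to0$). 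What does work is the accumulated-mass parametrization, i.e.\ choosing $a(u),b(u)$ so that $\int_0^{a(u)}\mu_1(F_t)\,dt=u=\int_0^{b(u)}\mu_2(G_t)\,dt$; this is literally the paper's transport $T$, and then indeed $\mu_3(H_{s(u)})\,s'(u)\geq1$ pointwise, by the bound \eqref{meanderiv}. Your second phrase points at this, your first phrase at the wrong one; a clean write-up must commit to the latter. Finally, you invoke the crucial product-rule inequality linking the derivative of the $(-p)$-mean of the levels to the $p$-mean of the level-set measures without stating or verifying it — that step is the actual content (the paper cites it as a separate lemma), and leaving it as an assertion is the main remaining gap.
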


\begin{proof}
Let $F_t:=\{x\in \Omega_1: f(x)>t\}$ and $G_t,$ $H_t$ analogously the superlevel sets of $f,g,$ and $h$, so that $\int_{0}^\infty \mu_1(F_t)dt=\int_{\Omega_1} f d\mu_1$. Note that by the lower bound on $h$, we have for any $a,b>0$ that $H_{M_{-p,\lambda}(a,b)}\supset S(F_a,G_b)$, so that 
$\mu_3(H_{M_{-p,\lambda}(a,b)})^p\geq \lambda\mu_1(F_a)^p+(1-\lambda)\mu_2(G_b)^p$. Let $T:\mathbb{R}_{\geq0}\to\mathbb{R}_{\geq0}$ be the transport map which pushes the distribution with density $t\mapsto \mu_1(F_t)$ to the distribution with density $t\mapsto \mu_2(G_t)$, so that $\int_0^s|F_t|dt=\int_0^{T(s)}|G_t|dt$ and $T'(t)=\frac{\mu_1(F_t)}{\mu_2(G_{T(t)})}$. We can then lower bound $\int_{\Omega_3} h d\mu_3$ as follows;
\begin{align*}
   \int_{\Omega_3} h d\mu_3&=\int_0^\infty \mu_3(H_t)dt =\int_0^\infty \mu_3\left(H_{M_{-p,\lambda}(t,T(t))}\right)dM_{-p,\lambda}(t,T(t))\\
   &\geq \int_0^\infty \left(\lambda\mu_1\left(F_t\right)^p+(1-\lambda)\mu_2\left(G_{T(t)}\right)^p\right)^{1/p}dM_{-p,\lambda}(t,T(t)).
\end{align*}
A simple computation (see e.g. \cite[Lemma 4.5]{figalli2025sharp}) shows that 
\begin{equation}\label{meanderiv}\frac{dM_{-p,\lambda}(t,T(t))}{dt}\geq \frac{1}{M_{p,\lambda}\left(1,\frac{1}{T'(t)}\right)}=\frac{1}{M_{p,\lambda}\left(1,\frac{\mu_2(G_{T(t)})}{\mu_1(F_t)}\right)}.
\end{equation}
Combining that with the fact that $\left(\lambda\mu_1\left(F_t\right)^p+(1-\lambda)\mu_2\left(G_{T(t)}\right)^p\right)^{1/p}=\mu_1(F_t)M_{p,\lambda}\left(1,\frac{\mu_2(G_{T(t)})}{\mu_1(F_t)}\right)$, we find that 
\begin{align*}
   \int_{\Omega_3} h d\mu_3&\geq \int_0^\infty \left(\lambda\mu_1\left(F_t\right)^p+(1-\lambda)\mu_2\left(G_{T(t)}\right)^p\right)^{1/p}dM_{-p,\lambda}(t,T(t))\geq \int_0^\infty \mu_1(F_t)dt=
   \int_{\Omega_1} fd\mu_1. \qedhere
\end{align*} 
\end{proof}

\begin{rmk}\label{pto0rem}Pr\'ekopa-Leindler type inequalities are obtained from the Borell-Brascamp-Lieb type inequalities by noting that if $h(S(x,y))\geq f(x)^\lambda g(y)^{1-\lambda}$, then in particular also $h(S(x,y))\geq M_{-p,\lambda}(f(x),g(y))$ for all $p>0$. In fact, if we have a bound of the type $\mu_3(S(A,B))\geq \mu_1(A)^\lambda\mu_2(B)^{1-\lambda}$, which can be seen as the limit of \eqref{pBM} as $p\to 0$, we still get a result akin to \Cref{liftprop} but with strengthened assumption $h(S(x,y))\geq \lim_{p\to 0}M_{-p,\lambda}(f(x),g(y))=f(x)^{\lambda}g(y)^{1-\lambda}$.
\end{rmk}

Though \eqref{pBM} is stated as an average, one can easily amend it for extra multiplicative constants. For instance, in $\mathbb{R}^d$ with $S(x,y)=x+y$, we have the classical Brunn-Minkowski inequality
$|A+B|\geq \left(|A|^{1/d}+|B|^{1/d}\right)^d=2^d \left(\frac12|A|^{1/d}+\frac12|B|^{1/d}\right)^d$. Hence, for $f,g,h\colon\mathbb{R}^d\to\mathbb{R}_{\geq0}$ with $\int f=\int g$ and $h(x+y)\geq \sqrt{f(x)g(y)}$, we find $\int h\geq 2^d\int f$. Of course, in this instance, one can easily see that replacing $h(x+y)\geq \sqrt{f(x)g(y)}$ with $h(\frac{x+y}{2})\geq \sqrt{f(x)g(y)}$ changes the integral by a factor $2^d$, but in general one might prefer either. For instance, in general groups $x+y$ is well-defined but $\frac{x+y}{2}$ might not exist. Meanwhile, in manifolds $\frac{x+y}{2}$ can be understood as a geodesic midpoint while $x+y$ might not have a natural interpretation.

\begin{rmk}\label{recoverrem}
    \Cref{liftprop} is sharp in the sense that one can recover the original underlying Brunn-Minkowski type inequality for $A$ and $B$ and the same power $p$. Indeed, let $f:=\frac{\textbf{1}_A}{\mu_1(A)}$ and $g:=\frac{\textbf{1}_B}{\mu_2(B)}$, so that $h=\textbf{1}_{S(A,B)}\cdot M_{-p,\lambda}\left(\frac{1}{\mu_1(A)},\frac{1}{\mu_2(B)}\right)=\frac{\textbf{1}_{S(A,B)}}{(\lambda\mu_1(A)^p+(1-\lambda)\mu_2(B)^p)^{1/p}} $. Since $\int_{\Omega_1} fd\mu_1=1=\int_{\Omega_2} g d\mu_2$, we conclude that $\frac{\mu_3(S(A,B))}{(\lambda\mu_1(A)^p+(1-\lambda)\mu_2(B)^p)^{1/p}}=\int_{\Omega_3}hd\mu_3\geq 1$, which was \eqref{pBM}.
\end{rmk}

\begin{rmk}
Those familiar with the Sobolev inequality, might recognize similarities with the derivation of the Sobolev inequality from the isoperimetric inequality. Indeed that derivation too allows for broad generalization to many of the examples considered below.
\end{rmk}

\subsection{Examples}

To illustrate the versatility of \Cref{liftprop} (and the underlying method), we consider a range of examples. These examples are provided for illustration and shouldn't be construed as assertions of novelty. The stated corollaries mostly follow directly from \Cref{liftprop}, but may require minimal variations on its proof.

\begin{comment}
First, recall the following characterization of distributive lattices by Daykin \cite{daykin1977lattice}.
\begin{thm}[\cite{daykin1977lattice}]\label{daykinthm}
A lattice $\Lambda$ is distributive, exactly if for all $X,Y\subset \Lambda$, we have
$$|X|\cdot |Y|\leq |X\wedge Y|\cdot |X\vee Y|,$$
where $X\wedge Y:=\{x\wedge y: x\in X,y\in Y\}$ and $X\vee Y:=\{x\vee y: x\in X,y\in Y\}$.
\end{thm}
By \Cref{liftprop} this characterization immediately implies the celebrated four-function theorem that Daykin proved a year later with Ahlswede \cite{ahlswede1978inequality}.

\begin{cor}
Let $\Lambda$ be a distributive lattice and $f,g,k,\ell\colon \Lambda\to\mathbb{R}_{\geq0}$ satisfying
$$f(x)g(y)\leq k(x\wedge y)\ell(x\vee y),$$
then
$$\left(\sum_{x\in\Lambda} f(x)\right)\left(\sum_{x\in\Lambda} g(x)\right)\leq \left(\sum_{x\in\Lambda} k(x)\right)\left(\sum_{x\in\Lambda} h(x)\right)$$
\end{cor}
\begin{proof}[Proof of reduction]
Let $\Omega_1=\Omega_2=\Lambda$ and $\Omega_3=\Lambda^2$. Let $\mu_1,\mu_2,\mu_3$ be the counting measures on their respective spaces. Let $S:\Lambda^2\to\Lambda^2, (x,y)\mapsto (x\wedge y, x\vee y)$. By \Cref{daykinthm}, we have
$$\mu_3(S(A,B))\geq $$

Let $h(x,y)=\sqrt{k(x)\ell(y)}$, so that $h(S(x,y))\geq \sqrt{f(x)g(y)}$.
\end{proof}
\end{comment}

\subsubsection{Gaussian measure}

The celebrated Gaussian Brunn-Minkowski inequality by Eskenazis and Moschidis \cite{eskenazis2021dimensional} asserts the following generalisation of the Brunn-Minkowski inequality to Gaussian space.
\begin{thm}[\cite{eskenazis2021dimensional}]\label{gaussianBM}
Let $\gamma_d$ be the Gaussian measure on $\mathbb{R}^d$, let $\lambda\in(0,1)$ and $K,L\subset\mathbb{R}^d$ convex and centrally symmetric, then
$$\gamma_n(\lambda K+ (1-\lambda)L)^{1/d}
\geq \lambda \gamma_n(K)^{1/d}+(1-\lambda)\gamma_n(L)^{1/d}.$$
\end{thm}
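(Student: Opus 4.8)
The overall strategy I would pursue is the standard route for dimensional Brunn--Minkowski inequalities: reduce the statement to an infinitesimal (second-variation) inequality on the boundary of a convex body, and then prove that inequality by exploiting the two features special to the Gaussian setting — log-concavity of the density $e^{-|x|^2/2}$, whose $-\log$ has Hessian equal to the identity, and, decisively, the central symmetry of $K$ and $L$. I would emphasise at the outset that the symmetry hypothesis cannot be dropped: already for $n=1$ the inequality reads $\gamma_1\big(\tfrac{K+L}{2}\big)\ge\tfrac12\gamma_1(K)+\tfrac12\gamma_1(L)$, which is false for general intervals (e.g. $K=[0,1]$, $L=[2,3]$) but true for symmetric ones. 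So the argument must genuinely use the symmetry, and this is the guiding constraint.

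First I would reduce to $K,L$ smooth and strictly convex by approximation, parametrise by support functions $h_\lambda=(1-\lambda)h_K+\lambda h_L$, and set $V(\lambda)=\gamma_n(K_\lambda)$ where $K_\lambda$ is the body with support function $h_\lambda$. The desired bound $V(\lambda)^{1/n}\ge(1-\lambda)V(0)^{1/n}+\lambda V(1)^{1/n}$ would follow from the pointwise inequality $V V''\le\frac{n-1}{n}(V')^2$, i.e. concavity of $\lambda\mapsto V(\lambda)^{1/n}$, which by reparametrisation need only be verified at $\lambda=0$ for every admissible pair. Using the Gauss map, a Reilly-type computation would express $V'(0)$ as $\int_{\partial K}\varphi\,e^{-|x|^2/2}\,d\mathcal{H}^{n-1}$ with $\varphi=h_L-h_K$ even, and $V''(0)$ as a quadratic functional of $\varphi$ involving the inverse second fundamental form of $\partial K$, the normal component $\langle x,\nu\rangle$, and the Gaussian weight.

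Then I would attack the resulting local inequality, which should decompose into a \emph{radial} contribution and a \emph{tangential} contribution. The radial contribution corresponds to the case where $L$ is a dilate of $K$; there the needed estimate is the infinitesimal form of the $(B)$-theorem of Cordero-Erausquin, Fradelizi and Maurey — log-concavity of $t\mapsto\gamma_n(e^tK)$ for centrally symmetric $K$ — combined with the elementary bound $\frac{d}{dt}\gamma_n(tK)\big|_{t=1}=\int_K(n-|x|^2)\,d\gamma_n\le n\gamma_n(K)$. The tangential contribution I would control by a Reilly/Brascamp--Lieb inequality for $K$ with the Gaussian weight; the decisive input there, and the point where central symmetry of $K$ really enters, is the sharp Poincar\'e inequality on $S^{n-1}$ restricted to \emph{even} functions, whose spectral gap $2n$ (against $n-1$ in general) is what lets the curvature and $\langle x,\nu\rangle$ terms fit. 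Combining the two contributions, and then passing from smooth strictly convex symmetric bodies to arbitrary centrally symmetric convex $K,L$ by a standard approximation together with continuity of $\gamma_n$ under Hausdorff convergence, would complete the proof.

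The hard part will be this local inequality. A ``$V^{1/n}$ concave'' statement of this type fails for general log-concave measures and even for the Gaussian without symmetry, so one must pinpoint exactly how evenness improves the relevant Poincar\'e-type constant and verify that the improvement precisely absorbs the curvature and $\langle x,\nu\rangle$ terms generated by the second variation — delicate bookkeeping in which one must check that no cross terms between the radial and tangential pieces undo the estimate. A secondary difficulty is that one may only be able to extract a weaker Brascamp--Lieb-type inequality on $\partial K$ rather than outright concavity of $V^{1/n}$, in which case the reduction in the first step must instead be organised as an open--closed argument on the set of $\lambda$ satisfying the chord bound, following the Kolesnikov--Milman framework.
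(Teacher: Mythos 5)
The paper does not prove \Cref{gaussianBM}: it is quoted as a black box from Eskenazis and Moschidis \cite{eskenazis2021dimensional} and used solely as input to \Cref{liftprop} in order to deduce \Cref{gaussiancor}. So there is no proof in this paper to compare your sketch against; the comparison would have to be with the cited source.

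As to the sketch on its own terms, the overall strategy you lay out --- reduce via a Kolesnikov--Milman-style second-variation argument to a local Brascamp--Lieb/Poincar\'e-type inequality on $\partial K$ with Gaussian weight, and then prove that local inequality by exploiting evenness of the test function (the improved spherical spectral gap $2n$ rather than $n-1$) together with the $(B)$-theorem along the radial direction --- is consistent in spirit with the route taken by Eskenazis and Moschidis, and your cautions that the symmetry hypothesis is indispensable and that cross terms between radial and tangential modes must be controlled are both well placed. But what you have written is a roadmap rather than a proof: you have not carried out the second-variation computation, verified that the radial and tangential contributions actually decouple, or established the local inequality, and you explicitly defer ``the hard part'' to future bookkeeping. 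If you wish to make this a complete argument you would need to write out the second variation of $\lambda \mapsto \gamma_d(K_\lambda)$ explicitly, isolate the quadratic form on $C^\infty_{\mathrm{even}}(\partial K)$ whose nonnegativity is equivalent to $(V^{1/d})''\le 0$, and then prove that form nonnegative; that is precisely where the substance of the cited proof lies. For the purposes of the present paper, however, this is moot, since the result is imported and not reproved.
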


Recall that a function $f\colon\mathbb{R}^d\to\mathbb{R}_{\geq 0}$ is \emph{even} if $f(x)=f(-x)$ and \emph{quasiconcave} if all its level sets $F_t=\{x\in\mathbb{R}^d: f(x)>t\}$ are convex. In particular any log-concave function is quasiconcave. Using \Cref{liftprop}, \Cref{gaussianBM} has the following corollary (also obtained in e.g. \cite{malliaris2025functional,cordero2025concavity,aishwarya2025entropy}).

\begin{cor}\label{gaussiancor}
Let $\gamma_d$ be the Gaussian measure on $\mathbb{R}^d$, let $\lambda\in(0,1)$ and let $f,g,h:\mathbb{R}^d\to\mathbb{R}_{\geq0}$ be even, quasiconcave functions so that $\int_{\mathbb{R}^d}fd\gamma_d=\int_{\mathbb{R}^d}gd\gamma_d$ and for all $x,y\in\mathbb{R}^d$, $h(\lambda x+(1-\lambda)y)\geq M_{-1/d,\lambda}(f(x),g(y))$, then 
$$\int_{\mathbb{R}^d}hd\gamma_d\geq \int_{\mathbb{R}^d}fd\gamma_d$$
\end{cor}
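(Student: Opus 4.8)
The plan is to rerun the proof of \Cref{liftprop} essentially verbatim, feeding in \Cref{gaussianBM} in place of the generic hypothesis \eqref{pBM}; the one point requiring care is that \Cref{gaussianBM} is available only for convex, centrally symmetric sets, so we must check that exactly such sets arise. Concretely, I would take $\Omega_1=\Omega_2=\Omega_3=\mathbb{R}^d$, $\mu_1=\mu_2=\mu_3=\gamma_d$, $S(x,y)=\lambda x+(1-\lambda)y$, $p=1/d$, and let $F_t,G_t,H_t$ be the superlevel sets of $f,g,h$ as in that proof.

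The first step is the structural observation: since $f$ is quasiconcave each $F_t$ is convex, and since $f$ is even each $F_t$ is centrally symmetric; likewise for the $G_t$. Hence \Cref{gaussianBM} applies with $K=F_a$, $L=G_b$ and yields $\gamma_d(\lambda F_a+(1-\lambda)G_b)^{1/d}\geq \lambda\gamma_d(F_a)^{1/d}+(1-\lambda)\gamma_d(G_b)^{1/d}$. On the other hand, since $M_{-1/d,\lambda}$ is non-decreasing in each argument, the hypothesis $h(\lambda x+(1-\lambda)y)\geq M_{-1/d,\lambda}(f(x),g(y))$ gives, exactly as in \Cref{liftprop}, the inclusion $H_{M_{-1/d,\lambda}(a,b)}\supset \lambda F_a+(1-\lambda)G_b$ for all $a,b>0$; the right-hand side, being a Minkowski combination of two convex sets, is convex, hence $\gamma_d$-measurable, so this inclusion is meaningful. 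Combining the two facts gives
$$\gamma_d\left(H_{M_{-1/d,\lambda}(a,b)}\right)^{1/d}\geq \lambda\,\gamma_d(F_a)^{1/d}+(1-\lambda)\,\gamma_d(G_b)^{1/d},$$
which is precisely the instance of \eqref{pBM} actually invoked in the proof of \Cref{liftprop}.

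From here nothing changes: one introduces the transport map $T\colon\mathbb{R}_{\geq0}\to\mathbb{R}_{\geq0}$ pushing $t\mapsto\gamma_d(F_t)$ to $t\mapsto\gamma_d(G_t)$ (well defined because $\int_{\mathbb{R}^d}f\,d\gamma_d=\int_{\mathbb{R}^d}g\,d\gamma_d$), applies the layer-cake formula and the change of variables $t\mapsto M_{-1/d,\lambda}(t,T(t))$, and uses the derivative estimate \eqref{meanderiv} together with the identity $(\lambda\gamma_d(F_t)^p+(1-\lambda)\gamma_d(G_{T(t)})^p)^{1/p}=\gamma_d(F_t)\,M_{p,\lambda}(1,\gamma_d(G_{T(t)})/\gamma_d(F_t))$ to conclude $\int_{\mathbb{R}^d}h\,d\gamma_d\geq\int_0^\infty\gamma_d(F_t)\,dt=\int_{\mathbb{R}^d}f\,d\gamma_d$.

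The only genuine obstacle is the one flagged above — that \Cref{gaussianBM} is restricted to convex centrally symmetric bodies, so \Cref{liftprop} is not applicable as a black box over all measurable sets — and it is resolved by (i) the relevant sets $F_a,G_b$ being convex and centrally symmetric because $f,g$ are even and quasiconcave, and (ii) the fact that one never needs the Gaussian inequality outside this regime, since the proof of \Cref{liftprop} touches \eqref{pBM} only through these superlevel sets. (Note that $h$ itself need not be assumed even or quasiconcave: only the superlevel inclusion for $h$ is used, and $\lambda F_a+(1-\lambda)G_b$ is automatically convex, hence measurable.) Everything else is a transcription of the proof of \Cref{liftprop} with $\mu_i=\gamma_d$.
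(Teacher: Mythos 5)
Your proposal is correct and follows exactly the route the paper intends: it is the proof of \Cref{liftprop} rerun with $\mu_i=\gamma_d$ and $S(x,y)=\lambda x+(1-\lambda)y$, with the one necessary "minimal variation" — observing that evenness and quasiconcavity of $f,g$ make the superlevel sets $F_a,G_b$ convex and centrally symmetric, which is exactly the regime in which \Cref{gaussianBM} (and hence \eqref{pBM}) is available, and that the proof of \Cref{liftprop} only ever applies \eqref{pBM} to such superlevel sets. Your side remark that the hypotheses on $h$ are not actually used in the argument is also accurate.
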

This example highlights the additional strenght of the Borell-Brascamp-Lieb inequality over the Pr\'ekopa-Leindler inequality. The Pr\'ekopa-Leindler version of \Cref{gaussiancor} with the stronger condition $h(\lambda x+(1-\lambda)y)\geq f(x)^\lambda g(y)^{1-\lambda}$ is a direct consequence of the regular Pr\'ekopa-Leindler inequality (combined with the fact that the Gaussian density function is log-concave) without the need for \Cref{gaussianBM}.

\subsubsection{Dimensional Brunn-Minkowski conjecture}

The dimensional Brunn-Minkowski conjecture is a generalization of the Gaussian Brunn-Minkowski conjecture proposed by Gardner and Zvavitch \cite{gardner2010gaussian} suggesting that \Cref{gaussianBM} in fact holds for all even log-concave measures (see also \cite{livshyts2023universal,cordero2023improved} for partial results).

\begin{conj}\label{dimBMcon}
Let $\mu$ be an even log-concave measure on $\mathbb{R}^d$, let $\lambda\in(0,1)$ and $K,L\subset\mathbb{R}^d$ convex and centrally symmetric, then 
$$\mu(\lambda K+ (1-\lambda)L)^{1/d} \geq \lambda \mu(K)^{1/d}+(1-\lambda)\mu(L)^{1/d}.$$
\end{conj}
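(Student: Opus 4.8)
\Cref{dimBMcon} is the Gardner--Zvavitch \emph{dimensional} Brunn--Minkowski conjecture and is open, so what follows is a plan for the most promising line of attack rather than a proof; it is the infinitesimal approach behind the partial results cited above, and I indicate the point at which it -- like every attempt outside the Gaussian and rotationally invariant cases -- stalls. Note also that, by \Cref{liftprop}, settling it would immediately promote it to a Borell--Brascamp--Lieb inequality for all even log-concave measures, exactly as \Cref{gaussianBM} yields \Cref{gaussiancor}.

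\emph{Reduction to a local inequality.} Concavity of $t\mapsto m(t):=\mu((1-t)K+tL)^{1/d}$ on $[0,1]$, which is what must be shown, is a local second-order condition; so after approximating by smooth strictly convex bodies it suffices to verify $m''\le0$ at a single smooth, strictly convex, origin-symmetric body under an arbitrary even perturbation $\theta\in C^2(S^{d-1})$ of its support function. Writing $\mu=e^{-V}$ with $V$ even and convex, Pr\'ekopa's theorem already gives that $t\mapsto\mu((1-t)K+tL)$ is log-concave, so the content of \Cref{dimBMcon} is precisely the extra factor $\tfrac1d$: the condition $m''\le0$ unwinds to an inequality of the shape
\[
\mu(K)\cdot B_{K,\mu}(\theta)\ \le\ \Big(1-\tfrac1d\Big)\Big(\int_{S^{d-1}}\theta\,dS_\mu(K,\cdot)\Big)^{2},
\]
where $dS_\mu(K,\cdot)$ is the push-forward of $e^{-V}\,d\mathcal H^{d-1}$ on $\partial K$ under the Gauss map and $B_{K,\mu}(\theta)$ is the second variation of $\mu(K_t)$ in the direction $\theta$. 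Transplanting $\theta$ to a function $\varphi$ on $\partial K$, this rearranges to a weighted Poincar\'e/Brascamp--Lieb inequality for the drift-Laplacian $\mathcal L_\mu=\Delta-\langle\nabla V,\nabla\,\cdot\,\rangle$ restricted to $\partial K$, with the second fundamental form $\mathrm{II}$ of $\partial K$ weighting the gradient term. Central symmetry of $K$, $L$ and $\mu$ is exactly what lets us restrict to $\varphi$ orthogonal to the constants and to the linear functionals $x\mapsto\langle x,e_i\rangle$; the latter encode translations of $K$ and are the obstruction to the sharp constant.

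\emph{Attacking the local inequality.} I would try to establish the displayed inequality by an integrated Bochner/Reilly identity for the weighted manifold $(\mathbb R^d,e^{-V})$ with boundary $\partial K$: expand $\int_{\partial K}(\mathcal L_{\partial K}\varphi)^2\,d\mu_{\partial K}$, integrate by parts, and sort the output into a bulk term carrying $\nabla^2V$, a boundary term carrying $\mathrm{II}$, and a leftover curvature/mean-curvature term. Convexity of $K$ gives $\mathrm{II}\succeq0$ and log-concavity of $\mu$ gives $\nabla^2V\succeq0$, so those two terms have the favourable sign, and a Cauchy--Schwarz against the mean-curvature piece is what would produce the gradient term with the factor $1-\tfrac1d$ -- this is where the dimension enters, exactly as in the Lebesgue computation.

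\emph{The main obstacle.} The Bochner expansion on a hypersurface leaves a term that is \emph{not} sign-definite under $\mathrm{II}\succeq0$ and $\nabla^2V\succeq0$ alone, and controlling it with the sharp factor $1-\tfrac1d$ is, in effect, the conjecture itself; this is why the best general results only reach $\mu(\lambda K+(1-\lambda)L)^{c/d}\ge\lambda\mu(K)^{c/d}+(1-\lambda)\mu(L)^{c/d}$ with some $c<1$. The sharp exponent is obtained only when extra structure sidesteps the local estimate: for the Gaussian, where $\nabla^2V=\mathrm{Id}$ lets one run a global heat-semigroup interpolation instead (the mechanism behind \Cref{gaussianBM}, due to Eskenazis and Moschidis \cite{eskenazis2021dimensional}), and for rotationally invariant $\mu$, where $\mathcal L_{\partial K}$ diagonalises in spherical harmonics and the offending term can be computed explicitly. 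There is no structure-preserving reduction of a general even log-concave $\mu$ to the Gaussian, and a needle/localization decomposition would throw away precisely the dimensional gain encoded in the exponent $1/d$, so neither shortcut applies. I therefore expect this plan to reduce \Cref{dimBMcon} cleanly to the weighted eigenvalue estimate displayed above, but not to settle it; a genuinely new idea for that leftover term -- or a non-local substitute for the Reilly argument -- is what is missing, which is why \Cref{dimBMcon} is recorded here only as a conjecture.
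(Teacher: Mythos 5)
The statement you were asked about is \Cref{dimBMcon}, which the paper records explicitly as an \emph{open} conjecture (Gardner--Zvavitch) and makes no attempt to prove; the only use the paper makes of it is to note, via \Cref{liftprop} and \Cref{recoverrem}, that it is equivalent to a functional Borell--Brascamp--Lieb formulation for even log-concave measures. You correctly recognize this, decline to claim a proof, and your account of the infinitesimal Reilly/Bochner route -- reduction to a weighted Poincar\'e-type estimate on $\partial K$, the role of central symmetry in discarding constants and linear modes, the favourable signs from $\mathrm{II}\succeq 0$ and $\nabla^2 V\succeq 0$, and the non-sign-definite leftover term that blocks the sharp factor $1-\tfrac1d$ outside the Gaussian and rotationally invariant cases -- is a fair description of the state of the art underlying the partial results \cite{livshyts2023universal,cordero2023improved} that the paper cites. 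There is no paper proof to compare against and no gap to flag beyond the one you already identify: the conjecture is open, and the paper treats it as such.
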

By \Cref{liftprop} this conjecture would imply and by \Cref{recoverrem} this conjecture is equivalent to the following conjecture.
\begin{conj}
Let $\mu$ be an even log-concave measure on $\mathbb{R}^d$, let $\lambda\in(0,1)$ and $f,g,h\colon\mathbb{R}^d\to\mathbb{R}_{\geq 0}$ quasiconcave and even, so that $\int_{\mathbb{R}^d}fd\mu=\int_{\mathbb{R}^d}gd\mu$ and $h(\lambda x+(1-\lambda)y)\geq M_{-1/d,\lambda}(f(x),g(y))$, then 
$$\int_{\mathbb{R}^d}hd\mu\geq\int_{\mathbb{R}^d}fd\mu.$$
\end{conj}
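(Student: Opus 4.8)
The plan is to establish the equivalence with \Cref{dimBMcon} that the excerpt asserts; since that statement is itself a conjecture, what follows is a reduction rather than an unconditional argument. In one direction I would deduce the functional statement from \Cref{dimBMcon} by rerunning the proof of \Cref{liftprop} with $\Omega_1=\Omega_2=\Omega_3=\mathbb{R}^d$, $S(x,y)=\lambda x+(1-\lambda)y$, $\mu_1=\mu_2=\mu_3=\mu$ and $p=1/d$. The one place where the proof of \Cref{liftprop} cannot be quoted verbatim is its use of \eqref{pBM}: \Cref{dimBMcon} supplies that inequality only for convex, centrally symmetric sets, not for arbitrary measurable ones. This is precisely what the hypotheses on $f$ and $g$ repair --- quasiconcavity makes every superlevel set $F_t, G_t$ convex, and evenness makes it centrally symmetric.

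In more detail, let $T$ be the transport map pushing the nonincreasing density $t\mapsto\mu(F_t)$ onto $t\mapsto\mu(G_t)$, which is well defined because $\int f\,d\mu=\int g\,d\mu<\infty$. For each $t>0$ the set $\lambda F_t+(1-\lambda)G_{T(t)}$ is a Minkowski combination of two convex centrally symmetric sets, hence again convex and centrally symmetric, so \Cref{dimBMcon} applies and yields $\mu\!\left(\lambda F_t+(1-\lambda)G_{T(t)}\right)^{1/d}\geq\lambda\mu(F_t)^{1/d}+(1-\lambda)\mu(G_{T(t)})^{1/d}$. Since the pointwise hypothesis on $h$ gives the containment $H_{M_{-1/d,\lambda}(t,T(t))}\supseteq\lambda F_t+(1-\lambda)G_{T(t)}$ (note that $h$ itself need not be quasiconcave or even for this step), monotonicity of $\mu$, the layer-cake change of variables, and the mean-derivative bound \eqref{meanderiv} combine exactly as in the proof of \Cref{liftprop} to give $\int h\,d\mu\geq\int f\,d\mu$.

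For the converse I would run the construction of \Cref{recoverrem} inside this class. Given convex centrally symmetric $K,L$ with $0<\mu(K),\mu(L)<\infty$, put $f=\mathbf{1}_K/\mu(K)$ and $g=\mathbf{1}_L/\mu(L)$; these are quasiconcave (their superlevel sets are $K$, $L$, or $\emptyset$) and even, with $\int f\,d\mu=\int g\,d\mu=1$. Then $h:=\mathbf{1}_{\lambda K+(1-\lambda)L}\cdot M_{-1/d,\lambda}\!\left(\tfrac{1}{\mu(K)},\tfrac{1}{\mu(L)}\right)$ is again quasiconcave and even, and $h(\lambda x+(1-\lambda)y)\geq M_{-1/d,\lambda}(f(x),g(y))$ holds pointwise, with equality when $x\in K$ and $y\in L$ and the right-hand side vanishing otherwise. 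The functional inequality then gives $\mu(\lambda K+(1-\lambda)L)\cdot M_{-1/d,\lambda}\!\left(\tfrac{1}{\mu(K)},\tfrac{1}{\mu(L)}\right)=\int h\,d\mu\geq 1$, which rearranges precisely to \Cref{dimBMcon}.

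The real obstacle is that \Cref{dimBMcon} remains open, so the substance of the functional statement is entirely the geometric one and the reduction merely transfers it. Within the reduction, the only points needing care are the standard measure-theoretic preliminaries behind $T$ (finiteness and monotonicity of $t\mapsto\mu(F_t)$, absolute continuity, and the degenerate layers where $\mu(F_t)=0$ or the sets are unbounded or lower-dimensional), together with the single load-bearing observation that \emph{quasiconcave and even} is exactly the closure property --- stable under passing to superlevel sets and under $\lambda$-Minkowski combination --- that keeps every set arising in the argument inside the family (convex, centrally symmetric) for which \Cref{dimBMcon} is asserted.
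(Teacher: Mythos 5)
Your reduction is exactly what the paper intends: it invokes Proposition~\ref{liftprop} (with the observation that quasiconcavity plus evenness keeps the superlevel sets in the convex, centrally symmetric class for which Conjecture~\ref{dimBMcon} is stated) for the forward direction, and Remark~\ref{recoverrem} (with the observation that normalized indicators of convex symmetric bodies lie in the even quasiconcave class) for the converse. This matches the paper's own justification, which cites precisely those two results to assert the equivalence.
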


\subsubsection{Unconditional measures}
We say a set $A\subset\mathbb{R}^d$ is \emph{unconditional} if $a\in A$ implies that $\{x\in\mathbb{R}^d: |x_i|\leq |a_i|, \text{ for all } i=1,\dots,d\}\subset A$. A function is \emph{unconditional} if all of its superlevel sets are unconditional and a measure is \emph{unconditional} if its induced by an unconditional density function. Note that both the Euclidean and the Gaussian measure are unconditional. Because of their many symmetries unconditional sets have been a test case for many difficult problems. For instance, Saroglou \cite{saroglou2015remarks} settled the notorious log-Brunn-Minkowski conjecture for unconditional sets.
In \cite{livshyts2017brunn}, Livshyts, Marsiglietti, Nayar, and Zvavitch established the following Brunn-Minkowski type inequality for unconditional sets and unconditional measures. This result was later extended by Ritor\'e and Yepes Nicol\'as \cite{ritore2018}.
\begin{thm}[\cite{livshyts2017brunn}]\label{Livthm}
Let $\mu$ be an unconditional, product measure, $\lambda\in[0,1]$, and $A,B\subset \mathbb{R}^d$ measurable, unconditional, and non-empty, such that $\lambda A+(1-\lambda)B$ is also measurable, then
$$\mu(\lambda A+(1-\lambda)B)^{1/d}\geq \lambda\mu(A)^{1/d}+(1-\lambda)\mu(B)^{1/d}.$$
\end{thm}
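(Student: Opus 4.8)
The plan is to straighten out the density and reduce to the classical Brunn--Minkowski inequality on $\mathbb{R}^d$, having first passed to the positive orthant. Write $\mu=\mu_1\otimes\cdots\otimes\mu_d$ with $d\mu_i=\rho_i\,dx$; since $\mu$ is a product measure, unconditionality forces each $\rho_i$ to be even and non-increasing on $[0,\infty)$, equivalently each $\psi_i(t):=\mu_i([0,t])$ is concave, non-decreasing and continuous with $\psi_i(0)=0$. For $S\subset\mathbb{R}^d$ put $S^+:=S\cap\mathbb{R}^d_{\ge0}$. I would begin with three elementary observations. First, an unconditional non-empty set $A$ contains $0$ and equals the union of the $2^d$ sign-reflections of $A^+$, which overlap only on the $\mu$-null coordinate hyperplanes, so $\mu(A)=2^d\mu(A^+)$; the same holds for $B$ and for the again unconditional set $C:=\lambda A+(1-\lambda)B$. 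Second, $A^+$ and $B^+$ are \emph{down-sets} in $\mathbb{R}^d_{\ge0}$, meaning $x\in A^+$ and $0\le w\le x$ coordinatewise imply $w\in A^+$. Third, $C^+\supseteq\lambda A^+ +(1-\lambda)B^+$, since $\lambda x+(1-\lambda)y\in\mathbb{R}^d_{\ge0}$ whenever $x\in A^+$, $y\in B^+$. Combining these, it is enough to show $\mu(\lambda A^+ +(1-\lambda)B^+)^{1/d}\ge\lambda\mu(A^+)^{1/d}+(1-\lambda)\mu(B^+)^{1/d}$.

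The heart of the argument is the coordinatewise cumulative map $\Psi:=(\psi_1,\dots,\psi_d)\colon\mathbb{R}^d_{\ge0}\to\mathbb{R}^d_{\ge0}$. By construction $\psi_i$ pushes $\mu_i|_{[0,\infty)}$ forward to Lebesgue measure on its range, so $\Psi$ pushes $\mu|_{\mathbb{R}^d_{\ge0}}$ forward to Lebesgue measure and preserves the measures of the sets in play. Concavity of each $\psi_i$ gives $\psi_i(\lambda s+(1-\lambda)t)\ge\lambda\psi_i(s)+(1-\lambda)\psi_i(t)$ for $s,t\ge0$, i.e.\ $\Psi(\lambda x+(1-\lambda)y)\ge\lambda\Psi(x)+(1-\lambda)\Psi(y)$ coordinatewise; and since $\psi_i$ is continuous, non-decreasing and vanishes at $0$, the $\Psi$-image of a down-set is again a down-set (by the intermediate value theorem). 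Hence $\Psi(\lambda A^+ +(1-\lambda)B^+)\supseteq\lambda\Psi(A^+)+(1-\lambda)\Psi(B^+)$, and applying the classical Brunn--Minkowski inequality to the sets $\Psi(A^+),\Psi(B^+)\subset\mathbb{R}^d$ yields exactly the desired inequality after undoing the reductions.

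The step I expect to be the real obstacle is the measure-theoretic bookkeeping around $\Psi$: when some $\rho_i$ vanishes on a half-line, $\psi_i$ is only non-decreasing (not injective), and in general $\Psi$ is not bi-Lipschitz, so one must make sure the image sets are Lebesgue measurable and that $\Psi$ neither loses nor creates mass. The clean way to handle this is to run the argument with compact inner approximations: fix compact down-sets $K\subset A^+$ and $K'\subset B^+$ (approximating $\mu(A^+),\mu(B^+)$ by inner regularity), restrict $\Psi$ to $\prod_i[0,c_i)$ with $c_i:=\sup\{t\ge0:\rho_i(t)>0\}$ where it is a homeomorphism onto $\prod_i[0,\mu_i([0,\infty)))$, note that $\Psi(\lambda K+(1-\lambda)K')$ is then compact with Lebesgue measure $\mu(\lambda K+(1-\lambda)K')$ and contains $\lambda\Psi(K)+(1-\lambda)\Psi(K')$, apply classical Brunn--Minkowski there, and let $K\uparrow A^+$, $K'\uparrow B^+$. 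The remaining checks---that sign-reflections are $\mu$-preserving, that sums of down-sets are down-sets, that $\Psi$ maps down-sets to down-sets, and that $\psi_i$ is concave because $\rho_i$ is non-increasing---are routine. Conceptually this is precisely the heuristic running through the paper: one transports the weighted problem to the model case, namely Lebesgue measure on $\mathbb{R}^d$ where the sharp Brunn--Minkowski inequality is already in hand, the sole analytic input being that the one-dimensional marginal distribution functions of an unconditional product measure are concave.
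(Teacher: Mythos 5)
This theorem is cited in the paper without proof, but your argument is correct and is essentially the argument of the cited reference \cite{livshyts2017brunn}: reduce to the positive orthant, where unconditional sets become down-sets, transport $\mu$ to Lebesgue measure via the coordinatewise cumulative-distribution map $\Psi=(\psi_1,\dots,\psi_d)$ --- concave in each coordinate precisely because unconditionality of a product density forces each marginal density to be even and non-increasing on the half-line --- and conclude with the classical Brunn--Minkowski inequality applied to $\Psi(A^+)$ and $\Psi(B^+)$. Your diagnosis that the only delicate point is the possible non-injectivity of $\Psi$ (flat stretches of $\psi_i$) and measurability of its images is accurate, and the compact inner approximation restricted to $\prod_i[0,c_i)$ that you propose resolves it.
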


By \Cref{liftprop}, we get a Borell-Brascamp-Lieb inequality for unconditional functions under unconditional measures.
\begin{cor}
Let $\mu$ be an unconditional product measure, $\lambda\in[0,1]$, and $f,g,h\colon\mathbb{R}^d\to\mathbb{R}_{\geq0}$ $\mu$-integrable and unconditional, such that $\int_{\mathbb{R}^d} fd\mu=\int_{\mathbb{R}^d} gd\mu$ and  $h\left(\lambda x +(1-
\lambda)y\right)\geq M_{-1/d,\lambda}(f(x),g(y))$ for all $x,y\in \mathbb{R}^d$, then
$$\int_{\mathbb{R}^d} hd\mu\geq\int_{\mathbb{R}^d} fd\mu.$$
\end{cor}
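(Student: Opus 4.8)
The plan is to run the proof of \Cref{liftprop} essentially verbatim, with one modification: since \Cref{Livthm} is available only for \emph{unconditional} sets, I must check that every set to which the Brunn--Minkowski-type inequality \eqref{pBM} is applied in that argument is indeed unconditional. Concretely, I would instantiate the framework of \Cref{liftprop} with $\Omega_1=\Omega_2=\Omega_3=\mathbb{R}^d$, $\mu_1=\mu_2=\mu_3=\mu$, $S(x,y)=\lambda x+(1-\lambda)y$, and $p=1/d$; then \Cref{Livthm} is precisely \eqref{pBM} for this data, valid whenever the sets involved are unconditional (and non-empty).

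The one new ingredient is the elementary observation that Minkowski combinations of unconditional sets are unconditional. If $A,B\subset\mathbb{R}^d$ are unconditional and $u=\lambda a+(1-\lambda)b$ with $a\in A$, $b\in B$, then for any $w$ with $|w_i|\leq|u_i|$ for all $i$ we have $|u_i|\leq\lambda|a_i|+(1-\lambda)|b_i|$, so $w_i$ lies in $\lambda[-|a_i|,|a_i|]+(1-\lambda)[-|b_i|,|b_i|]$; writing $w_i=\lambda a_i'+(1-\lambda)b_i'$ with $|a_i'|\leq|a_i|$, $|b_i'|\leq|b_i|$, unconditionality of $A,B$ gives $a'=(a_i')_i\in A$ and $b'=(b_i')_i\in B$, whence $w\in\lambda A+(1-\lambda)B$.

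With this in hand the proof of \Cref{liftprop} goes through. Since $f$ and $g$ are unconditional, all superlevel sets $F_t=\{f>t\}$ and $G_t=\{g>t\}$ are unconditional, and by the observation above so is $\lambda F_a+(1-\lambda)G_b$ for every $a,b>0$. Exactly as in \Cref{liftprop}, the hypothesis $h(\lambda x+(1-\lambda)y)\geq M_{-1/d,\lambda}(f(x),g(y))$ together with strict monotonicity of the $p$-mean gives the inclusion $\lambda F_a+(1-\lambda)G_b\subset H_{M_{-1/d,\lambda}(a,b)}$; applying \Cref{Livthm} to the unconditional sets $F_a,G_b$ then yields $\mu\!\left(H_{M_{-1/d,\lambda}(a,b)}\right)^{1/d}\geq\lambda\mu(F_a)^{1/d}+(1-\lambda)\mu(G_b)^{1/d}$. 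From there the layer-cake representation $\int h\,d\mu=\int_0^\infty\mu(H_t)\,dt$, the transport map $T$ pushing the density $t\mapsto\mu(F_t)$ to $t\mapsto\mu(G_t)$ (well defined because $\int f\,d\mu=\int g\,d\mu$), and the mean-derivative estimate \eqref{meanderiv} are used without change to conclude $\int h\,d\mu\geq\int f\,d\mu$. (Note that only unconditionality of $f$ and $g$ is actually used; the hypothesis on $h$ could be dropped, but we keep it as stated.)

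I expect the only real obstacle — and a mild one — to be measurability: one must know that the unconditional superlevel sets and their Minkowski combinations are $\mu$-measurable so that the layer-cake integrals and \eqref{pBM} make sense. This is handled in the standard way, first proving the statement for $f,g,h$ whose superlevel sets are compact (so that $\lambda F_a+(1-\lambda)G_b$ is compact and unconditional, hence Borel), and then removing the regularity assumption by approximating $f,g,h$ from below by such functions. The degenerate cases where $F_a$ or $G_b$ is empty are trivial (both sides of the relevant inclusion and inequality vanish), exactly as in \Cref{liftprop}.
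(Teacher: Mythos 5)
Your proposal is correct and matches the paper's intended route: the paper derives this corollary simply by invoking \Cref{liftprop} with \Cref{Livthm} playing the role of the Brunn--Minkowski input \eqref{pBM}, which is exactly what you do. Your only addition is to make explicit the (easy but necessary) verification that the superlevel sets of $f,g$ and their Minkowski combinations remain in the unconditional class where \Cref{Livthm} applies, together with the standard measurability caveats -- precisely the kind of ``minimal variation'' the paper alludes to in its discussion of the examples.
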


The crucial step in the previously mentioned resolution of the log-Brunn-Minkowski conjecture for unconditional sets by Saroglou \cite{saroglou2015remarks}, is a Brunn-Minkwoski type inequality for the following operation. Given $x,y\in \mathbb{R}_{\geq 0}^d$ and $\lambda\in (0,1)$, write $x^\lambda y^{1-\lambda}$ for the point $(x_1^\lambda y_d^{1-\lambda},\dots,x_d^\lambda y_d^{1-\lambda})$ and for unconditional sets $A,B\subset\mathbb{R}^d$, write $A^\lambda B^{1-\lambda}$ for the unconditional set defined by $\{x^\lambda y^{1-\lambda}: x\in A\cap\mathbb{R}^d_{\geq0},y\in B\cap\mathbb{R}^d_{\geq0}\}$ in the positive orthant. For unconditional convex sets $A,B$, we find that $A^{\lambda}B^{1-\lambda}$ is a subset of the geometric mean considered in the log-Brunn-Minkowski conjecture, so that the following suffices to settle this particular case of the log-Brunn-Minkowski inequality.

\begin{thm}[\cite{saroglou2015remarks}]\label{sarthm}
Let $A,B\subset\mathbb{R}^d$ unconditional and $\lambda\in (0,1)$, then
$$|A^\lambda B^{1-\lambda}|\geq |A|^\lambda |B|^{1-\lambda}.$$
\end{thm}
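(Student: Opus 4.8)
The plan is the standard route to Saroglou's inequality: reduce to the positive orthant, pass to logarithmic coordinates so that the coordinatewise geometric mean $x^\lambda y^{1-\lambda}$ becomes an ordinary Minkowski average $\lambda(\cdot)+(1-\lambda)(\cdot)$, and then invoke the Pr\'ekopa-Leindler inequality for the exponentially weighted volume that the change of variables produces.

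First I would make two harmless reductions. By inner regularity it is enough to treat $A,B$ compact of positive Lebesgue measure: replacing a compact $K\subseteq A$ by its unconditional hull (a continuous image of $K\times[-1,1]^d$, hence compact) keeps it compact, unconditional and inside $A$, likewise for $B$, and $K_A^{\lambda}K_B^{1-\lambda}\subseteq A^{\lambda}B^{1-\lambda}$; if some factor is null the right-hand side vanishes. Write $A^{\circ}:=A\cap\mathbb{R}^d_{>0}$, $B^{\circ}:=B\cap\mathbb{R}^d_{>0}$, and $C:=\{x^\lambda y^{1-\lambda}:x\in A\cap\mathbb{R}^d_{\geq0},\ y\in B\cap\mathbb{R}^d_{\geq0}\}$, a compact set. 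Since any unconditional set $S$ has $|S|=2^d|S\cap\mathbb{R}^d_{>0}|$ (the $2^d$ sign-reflections of $S\cap\mathbb{R}^d_{>0}$ lie in $S$, are disjoint, and exhaust $S$ off the coordinate hyperplanes), we get $|A|=2^d|A^{\circ}|$, $|B|=2^d|B^{\circ}|$, and $|A^{\lambda}B^{1-\lambda}|=2^d|A^{\lambda}B^{1-\lambda}\cap\mathbb{R}^d_{>0}|\geq 2^d|C\cap\mathbb{R}^d_{>0}|=2^d|C|$, using $C\subseteq A^{\lambda}B^{1-\lambda}$. As $\lambda+(1-\lambda)=1$, it therefore suffices to prove
\[
|C|\ \geq\ |A^{\circ}|^{\lambda}\,|B^{\circ}|^{1-\lambda}.
\]

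Now I would apply $L\colon\mathbb{R}^d_{>0}\to\mathbb{R}^d$, $L(x):=(\log x_1,\dots,\log x_d)$, a homeomorphism with $L(x^\lambda y^{1-\lambda})=\lambda L(x)+(1-\lambda)L(y)$. Since $x^\lambda y^{1-\lambda}$ has all coordinates positive exactly when $x$ and $y$ do, $L(C\cap\mathbb{R}^d_{>0})=\lambda U+(1-\lambda)V=:W$ where $U:=L(A^{\circ})$ and $V:=L(B^{\circ})$; here $W$ is measurable because $C$ is compact. Writing $\sigma(t):=t_1+\dots+t_d$, the substitution $x=(e^{t_1},\dots,e^{t_d})$ has Jacobian $e^{\sigma(t)}$, so $|A^{\circ}|=\int_U e^{\sigma}$, $|B^{\circ}|=\int_V e^{\sigma}$, $|C|=\int_W e^{\sigma}$, and the target becomes $\int_W e^{\sigma}\geq\bigl(\int_U e^{\sigma}\bigr)^{\lambda}\bigl(\int_V e^{\sigma}\bigr)^{1-\lambda}$. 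This is Pr\'ekopa-Leindler applied to $f:=e^{\sigma}\mathbf{1}_U$, $g:=e^{\sigma}\mathbf{1}_V$, $h:=e^{\sigma}\mathbf{1}_W$: for $s\in U$, $r\in V$ one has $\lambda s+(1-\lambda)r\in W$ and, $\sigma$ being linear, $h(\lambda s+(1-\lambda)r)=e^{\lambda\sigma(s)+(1-\lambda)\sigma(r)}=f(s)^{\lambda}g(r)^{1-\lambda}$, while for $s\notin U$ or $r\notin V$ the right-hand side is $0$; hence $h(\lambda s+(1-\lambda)r)\geq f(s)^{\lambda}g(r)^{1-\lambda}$ for all $s,r$, and the unnormalised form of Pr\'ekopa-Leindler (obtained from the version in the introduction by rescaling $f,g,h$ by their integrals) gives $\int h\geq(\int f)^{\lambda}(\int g)^{1-\lambda}$, as wanted. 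Undoing the reductions completes the proof.

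The mathematical content is entirely in the appeal to Pr\'ekopa-Leindler; the one place needing care is the measure-theoretic bookkeeping — that $A^{\lambda}B^{1-\lambda}$, $C$ and $W$ are measurable, that the passage from $A$ to $A\cap\mathbb{R}^d_{\geq0}$ to $A\cap\mathbb{R}^d_{>0}$ costs nothing, and the degenerate cases where $A$ or $B$ is unbounded or null. I would dispose of all of this at the outset by reducing to compact sets of positive measure, after which every set in play is a continuous image of a compact set intersected with an open orthant, hence Borel, and the $2^d$-factors cancel precisely because $\lambda+(1-\lambda)=1$.
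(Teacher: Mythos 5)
Your proof is correct, and since the paper itself only cites this result from Saroglou without reproving it, the relevant comparison is with the cited source: the logarithmic change of coordinates $x\mapsto(\log x_i)_i$ turning the coordinatewise geometric mean into a Minkowski average, the Jacobian $e^{\sigma}$ being log-linear, and a single application of Pr\'ekopa--Leindler, is precisely Saroglou's original argument. The reductions you make (to compact unconditional sets of positive measure via inner regularity, the $2^d$-factor bookkeeping across orthants, and the observation that $C\cap\mathbb{R}^d_{>0}$ is $\sigma$-compact so $W$ is Borel) are all sound and handle the measurability issues cleanly.
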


We immediately obtain the following corollary, using \Cref{liftprop} and \Cref{pto0rem}.

\begin{cor}\label{sarcor}
Let $\lambda\in (0,1)$ and $f,g,h\colon\mathbb{R}^d\to\mathbb{R}_{\geq 0}$ unconditional with $\int_{\mathbb{R}^d} fdx=\int_{\mathbb{R}^d} gdx$ and $h(x^\lambda y^{1-\lambda})\geq f(x)^{\lambda}g(y)^{1-\lambda}$ for all $x,y\in\mathbb{R}^d$, then
$$\int_{\mathbb{R}^d} fdx\geq\int_{\mathbb{R}^d} hdx.$$
\end{cor}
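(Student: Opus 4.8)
The plan is to obtain this as the $p\to0$ case of \Cref{liftprop}, in the form recorded in \Cref{pto0rem}, with $\Omega_1=\Omega_2=\Omega_3=\mathbb{R}^d$, all three measures equal to Lebesgue measure, and $S(x,y)=x^\lambda y^{1-\lambda}$. The underlying Brunn--Minkowski-type input is then precisely Saroglou's theorem, $|S(A,B)|=|A^\lambda B^{1-\lambda}|\geq |A|^\lambda|B|^{1-\lambda}$, which is the limiting form of \eqref{pBM} as $p\to0$; so the mechanism applies with $\lambda$-weighted geometric means ($0$-means) throughout, exactly as in \Cref{pto0rem} and \Cref{recoverrem}.

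The first step is to pass from the pointwise hypothesis to a superlevel-set inequality. Writing $F_s=\{f>s\}$, $G_t=\{g>t\}$, $H_u=\{h>u\}$, the assumption $h(x^\lambda y^{1-\lambda})\geq f(x)^\lambda g(y)^{1-\lambda}$ forces $H_{s^\lambda t^{1-\lambda}}\supseteq F_s^\lambda G_t^{1-\lambda}$ for all $s,t>0$: if $x\in F_s\cap\mathbb{R}^d_{\geq0}$ and $y\in G_t\cap\mathbb{R}^d_{\geq0}$ then $h(x^\lambda y^{1-\lambda})>s^\lambda t^{1-\lambda}$, and one extends to a full unconditional set using that $h$, hence each $H_u$, is unconditional. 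Applying Saroglou's theorem to the unconditional sets $F_s,G_t$ then gives $|H_{s^\lambda t^{1-\lambda}}|\geq |F_s|^\lambda|G_t|^{1-\lambda}$. This single step uses all three unconditionality hypotheses --- on $f,g$ so that $F_s,G_t$ are unconditional and Saroglou applies, and on $h$ so that the image can be closed up --- and it is also the reason one cannot invoke \Cref{liftprop} verbatim: that proposition requires the set inequality \eqref{pBM} for \emph{all} measurable $A,B$, whereas Saroglou's theorem supplies it only for unconditional sets. The ``minimal variation on the proof of \Cref{liftprop}'' is exactly to restrict attention to the unconditional superlevel sets.

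With this bound in hand, the remainder is the transport argument in the proof of \Cref{liftprop} specialised to $p=0$. Using $\int f=\int g$, let $T\colon\mathbb{R}_{\geq0}\to\mathbb{R}_{\geq0}$ push the density $s\mapsto|F_s|$ onto $t\mapsto|G_t|$, so that $\int_0^s|F_r|\,dr=\int_0^{T(s)}|G_r|\,dr$ and $T'(s)=|F_s|/|G_{T(s)}|$. Combining the layer-cake formula, the change of variables $u=M_{0,\lambda}(s,T(s))=s^\lambda T(s)^{1-\lambda}$, the derivative estimate \eqref{meanderiv} with $p=0$, the superlevel-set bound above, and the identity $M_{0,\lambda}(|F_s|,|G_{T(s)}|)/M_{0,\lambda}(1,1/T'(s))=|F_s|$, the two sides collapse to $\int_{\mathbb{R}^d}f\,dx$ and $\int_{\mathbb{R}^d}h\,dx$, yielding the inequality $\int_{\mathbb{R}^d}f\,dx\geq\int_{\mathbb{R}^d}h\,dx$ of the corollary. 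Along the way one must also spell out the meaning of $x^\lambda y^{1-\lambda}$ for $x,y$ outside $\mathbb{R}^d_{\geq0}$ and check compatibility with the orthant-by-orthant definition of $A^\lambda B^{1-\lambda}$, using that Lebesgue measure on $\mathbb{R}^d$ is $2^d$ times its restriction to $\mathbb{R}^d_{\geq0}$ so that this factor cancels on both sides.

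The main obstacle is therefore structural rather than analytic: making the unconditionality bookkeeping airtight so that Saroglou's theorem genuinely applies to every set that appears (the $F_s$, the $G_t$, the image $F_s^\lambda G_t^{1-\lambda}$, and the $H_u$), and verifying that the passage from \Cref{liftprop} to this setting costs only this restriction. Once that is in place the argument is a routine transcription of the proof of \Cref{liftprop} with $\lambda$-weighted $0$-means.
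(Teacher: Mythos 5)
Your approach is exactly the paper's: the paper gives no argument beyond citing \Cref{liftprop} and \Cref{pto0rem}, and you have correctly filled in the details, in particular spotting that the "minimal variation" needed is that Saroglou's theorem only applies to unconditional sets, which is fine because the unconditionality of $f,g,h$ makes every superlevel set $F_s,G_t,H_u$ (and the image $F_s^\lambda G_t^{1-\lambda}$) unconditional. One thing worth flagging: the argument you sketch, like \Cref{liftprop} itself and every other corollary in the paper, produces $\int_{\mathbb{R}^d} h\,dx\geq\int_{\mathbb{R}^d} f\,dx$; the reversed inequality $\int f\geq\int h$ in the statement of \Cref{sarcor} is evidently a typo in the paper, and you should not assert that your chain of inequalities "yields" the reversed form --- it yields the correct form $\int h\geq\int f$.
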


This corollary in fact also provides the induction step for an inductive proof of \Cref{sarthm} as follows.
\begin{proof}[Proof of \Cref{sarthm} and \Cref{sarcor}]
Proceed by induction on dimension $d$. For $d=1$, \Cref{sarthm} is trivial and \Cref{sarcor} follows from \Cref{liftprop} and \Cref{pto0rem}. For dimension $d$, consider the function $f:\mathbb{R}^{d-1}\to\mathbb{R}_{\geq0}, x\mapsto \mathcal{H}^1(A\cap (\{x\}\times\mathbb{R}))$ which measures the length of the fibre of $A$ above $x$, so that $\int_{\mathbb{R}^{d-1}}f(x)dx=|A|$. Analogously define $g,h$ for the sets $B$ and $A^\lambda B^{1-\lambda}$. Note that as all sets are unconditional, these functions satisfy $h(x^\lambda y^{1-\lambda})\geq f(x)^{\lambda}g(y)^{1-\lambda}$ for all $x,y\in\mathbb{R}^{d-1}$. Hence, by \Cref{sarcor} for dimension $d-1$, we find $|A^\lambda B^{1-\lambda}|=\int_{\mathbb{R}^{d-1}}h(x)dx\geq\int_{\mathbb{R}^{d-1}}f(x)dx=|A|$, so that \Cref{sarthm} holds for dimension $d$. \Cref{sarcor} for dimension $d$ follows by \Cref{liftprop} and \Cref{pto0rem} concluding the induction.
\end{proof}

\subsubsection{Kemperman's theorem}

Kemperman's theorem \cite{kemperman1964products} gives a very general, albeit from the perspective of the Brunn-Minkowski inequality weak, lower bound on the size of sumsets in a very general class of groups. Let $G$ be a unimodular, locally compact, connected\footnote{ \cite{satomi2022inequality} considers a relaxation of the connectedness condition, which in turn has its own functional corollary.} group with Haar measure $\mu$, then it says the following.
\begin{thm}
Let $A,B\subset G$ be measurable so that $\mu(A)+\mu(B)\leq \mu(G)$ and let $AB:=\{ab:a\in A, b\in B\}$ again measurable, then
$$\mu(AB)\geq \mu(A)+\mu(B).$$
\end{thm}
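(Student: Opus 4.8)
The plan is to reduce the statement to compact sets by regularity and then prove the formally stronger (but equivalent) bound $\mu(AB)\ge\min\{\mu(A)+\mu(B),\,\mu(G)\}$ by the $e$-transform, or compression, argument that underlies Kneser's theorem; the hypothesis $\mu(A)+\mu(B)\le\mu(G)$ will be used only at the very end.

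\textit{Reduction and normalisation.} By inner regularity of the Haar measure, choose compact $A'\subseteq A$ and $B'\subseteq B$ with $\mu(A')\uparrow\mu(A)$ and $\mu(B')\uparrow\mu(B)$. Then $A'B'$ is the continuous image of the compact set $A'\times B'$, hence compact and measurable, $A'B'\subseteq AB$, and $\mu(A')+\mu(B')\le\mu(G)$, so it suffices to prove the inequality for compact sets and pass to the limit. If $\mu(B)=0$, fix $b\in B$: then $Ab\subseteq AB$ with $\mu(Ab)=\mu(A)$ by invariance, so $\mu(AB)\ge\mu(A)=\mu(A)+\mu(B)$, and symmetrically if $\mu(A)=0$. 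Hence assume $A,B$ compact of positive measure; replacing $(A,B)$ by $(Ab,\,b^{-1}B)$ for some $b\in B$, which alters neither $AB$ nor the two measures, we may assume $e\in B$, so that $A=Ae\subseteq AB$.

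\textit{The $e$-transform.} For $g\in G$ set $A_g:=A\cup Ag$ and $B_g:=B\cap g^{-1}B$. One checks, without using commutativity, that $A_gB_g\subseteq AB$: indeed $A(B\cap g^{-1}B)\subseteq AB$, and $Ag(B\cap g^{-1}B)=A(gB\cap B)\subseteq AB$ since $g(g^{-1}B)=B$. By unimodularity $\mu(A_g)+\mu(B_g)=\mu(A)+\mu(B)+\Delta(g)$, where $\Delta(g):=\mu(A\setminus Ag)-\mu(B\setminus g^{-1}B)$ is continuous and vanishes at $g=e$; moreover a Fubini computation using the invariance of $\mu$ gives $\int_G\mu(A\setminus Ag)\,d\mu(g)=\mu(A)\big(\mu(G)-\mu(A)\big)$, and likewise for $B$, so that
$$\int_G\Delta(g)\,d\mu(g)=\big(\mu(A)-\mu(B)\big)\big(\mu(G)-\mu(A)-\mu(B)\big).$$
This is $\ge0$ once $\mu(A)+\mu(B)\le\mu(G)$ and $\mu(A)\ge\mu(B)$ — and we may assume the latter by symmetry, passing to the mirror transform $A\cup gA,\ B\cap Bg^{-1}$ when $\mu(B)>\mu(A)$. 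A short logical step (if $\Delta(g)\ge0$ always forced $\mu(A\setminus Ag)=0$, then $\Delta\le0$ everywhere, so $\Delta\equiv0$, so $\mu(A\setminus Ag)=0$ for all $g$, making $A$ of full Haar measure) shows that, unless $A$ has full Haar measure — excluded here, as $\mu(B)>0$ — there is a $g$ with $\Delta(g)\ge0$ and $\mu(Ag\setminus A)>0$; for such $g$, after renormalising so that $e\in B_g$ (and disposing of the easy case $\mu(B_g)=0$, where $A_g$ already witnesses the claim), the new pair satisfies $A_gB_g\subseteq AB$, with $\mu(A_g)>\mu(A)$ and $\mu(A_g)+\mu(B_g)\ge\mu(A)+\mu(B)$.

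\textit{Iteration, endgame, and the main obstacle.} Iterating, $\mu(A)$ strictly increases and stays bounded (by $\mu(G)$ when $G$ is compact; when $G$ is non-compact, $\mu(G)=\infty$, the hypothesis is vacuous, and this is the classical case, provable by the same transform — or, over $\mathbb{R}$, by an elementary interval estimate), while $\mu(A)+\mu(B)$ never decreases. Passing to a limiting ``saturated'' configuration $(A^\ast,B^\ast)$ one obtains $A^\ast B^\ast\subseteq AB$, $\mu(A^\ast)+\mu(B^\ast)\ge\mu(A)+\mu(B)$, with $A^\ast$ equal, modulo a null set, to a union of cosets of the closed subgroup $H:=\{g:\mu(A^\ast\triangle A^\ast g)=0\}$; descending to $G/H$ with its Haar measure leaves the image of $A^\ast$ with trivial right-stabiliser, reducing matters to a strictly simpler instance, and a descent on this complexity (with trivial base case) finishes, the hypothesis $\mu(A)+\mu(B)\le\mu(G)$ being exactly what rules out the sole obstruction, namely $G/H$ collapsing to a point. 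Lifting back, $\mu(AB)\ge\mu(A^\ast B^\ast)\ge\mu(A^\ast)+\mu(B^\ast)\ge\mu(A)+\mu(B)$. I expect the main obstacle to be precisely this last stage: the gain $\mu(A_g)-\mu(A)$ admits no uniform lower bound, so making the iteration converge to a genuine saturated configuration needs the customary Kneser-type bookkeeping (a supremum over all reachable sets, and a second-countability reduction of $G$), and the $H$-quotient descent — together with the verification that $\mu(A)+\mu(B)\le\mu(G)$ is the sharp hypothesis there — is the technically delicate heart; the reduction to compact sets, the $e$-transform containment, and the integral identity are routine by comparison.
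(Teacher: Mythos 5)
The paper does not actually prove this statement; it is Kemperman's theorem, cited directly to \cite{kemperman1964products}, and is used as a black box to illustrate \Cref{liftprop}. So there is no in-paper proof to compare yours against, and the review has to be on the merits of your outline alone.

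The preparatory material in your proposal is sound: the reduction to compact sets by inner regularity, the normalisation $e\in B$, the containment $A_gB_g\subseteq AB$, the identity $\mu(A_g)+\mu(B_g)=\mu(A)+\mu(B)+\Delta(g)$ using unimodularity, and the Fubini computation $\int_G\Delta\,d\mu=(\mu(A)-\mu(B))(\mu(G)-\mu(A)-\mu(B))$ all check out. But — as you acknowledge yourself — the endgame is where the whole content of Kemperman's theorem lives, and your sketch of it has genuine gaps, not merely missing details. Three concrete ones. First, the transform need not preserve the hypothesis: after one step $\mu(A_g)+\mu(B_g)$ may exceed $\mu(G)$, so the iteration leaves the regime where the statement applies; this case is in fact easy (for compact $G$, $\mu(A_g)+\mu(B_g)>\mu(G)$ forces $A_gB_g=G$ by pigeonhole), but it must be disposed of explicitly and you do not. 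Second, the passage to a saturated $(A^\ast,B^\ast)$ requires a maximality or exhaustion argument (a transfinite iteration, a Zorn's-lemma selection, or a well-chosen supremum over transforms), together with a second-countability reduction, none of which is supplied — "passing to a limiting configuration" is not a construction. Third, and most seriously, the stabiliser $H=\{g:\mu(A^\ast\triangle A^\ast g)=0\}$ is a closed subgroup but in a non-abelian $G$ it need not be normal, in which case $G/H$ is only a coset space, carries no canonical group structure or Haar measure, and the proposed "descent to $G/H$" does not make sense as stated. Kemperman's own proof has to navigate exactly this non-normality issue, and doing so is the technical heart of his argument; a Kneser-style quotient descent is the right intuition but does not transfer verbatim to the non-commutative setting. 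So the proposal is a correct setup of the Dyson/Kneser transform machinery followed by a heuristic, and should not be regarded as a proof.
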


By \Cref{liftprop}, we obtain the following corollary.

\begin{cor}
Let $f,g,h\colon G\to\mathbb{R}_{\geq0}$ be integrable so that $\int_G f d\mu=\int_G gd\mu$, $\mu(supp(f))+\mu(supp(g))\leq \mu(G)$ and for all $a,b\in G$, $h(ab)\geq M_{-1,\frac12}(f(a),g(b))=\frac{2}{\frac{1}{f(a)}+\frac{1}{g(b)}}$, then
$$\int_G hd\mu\geq 2\int fd\mu.$$
\end{cor}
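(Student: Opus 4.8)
The plan is to follow the template of \Cref{liftprop} almost verbatim, with $\Omega_1=\Omega_2=\Omega_3=G$, $\mu_1=\mu_2=\mu_3=\mu$, and $S(a,b)=ab$, using $p=1$ and $\lambda=\frac12$, but with one extra ingredient: we must verify that the hypothesis $\mu(\operatorname{supp}(f))+\mu(\operatorname{supp}(g))\leq\mu(G)$ propagates to the superlevel sets so that Kemperman's inequality is actually applicable at every level. First I would set $F_t=\{a\in G: f(a)>t\}$ and $G_t,H_t$ the analogous superlevel sets of $g,h$, noting $\int_G f\,d\mu=\int_0^\infty\mu(F_t)\,dt$ and likewise for $g,h$. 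For $s,t>0$ we have $F_s\subset\operatorname{supp}(f)$ and $G_t\subset\operatorname{supp}(g)$, hence $\mu(F_s)+\mu(G_t)\leq\mu(\operatorname{supp}(f))+\mu(\operatorname{supp}(g))\leq\mu(G)$, so Kemperman's theorem gives $\mu(F_s\,G_t)\geq\mu(F_s)+\mu(G_t)$; equivalently $\mu(S(F_s,G_t))\geq M_{1,1/2}(\mu(F_s),\mu(G_t))$ up to the factor $2$ coming from $\mu(F_s)+\mu(G_t)=2\cdot\tfrac12(\mu(F_s)+\mu(G_t))=2\,M_{1,1/2}(\mu(F_s),\mu(G_t))$. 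This is the ``extra multiplicative constant'' phenomenon discussed in the remark after \Cref{recoverrem}, and it is exactly what produces the factor $2$ in the conclusion.

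Next I would run the transport argument. The lower bound $h(ab)\geq M_{-1,1/2}(f(a),g(b))$ implies, for all $a,b>0$, the superlevel set inclusion $H_{M_{-1,1/2}(a,b)}\supset S(F_a,G_b)$, whence $\mu(H_{M_{-1,1/2}(a,b)})\geq 2\,M_{1,1/2}(\mu(F_a),\mu(G_b))$ by the previous paragraph. Let $T\colon\mathbb{R}_{\geq0}\to\mathbb{R}_{\geq0}$ be the transport map pushing the density $t\mapsto\mu(F_t)$ to the density $t\mapsto\mu(G_t)$, so $T'(t)=\mu(F_t)/\mu(G_{T(t)})$. Then, exactly as in the proof of \Cref{liftprop},
\begin{align*}
\int_G h\,d\mu &=\int_0^\infty \mu(H_t)\,dt=\int_0^\infty \mu\!\left(H_{M_{-1,1/2}(t,T(t))}\right)\,dM_{-1,1/2}(t,T(t))\\
&\geq \int_0^\infty 2\,M_{1,1/2}\!\left(\mu(F_t),\mu(G_{T(t)})\right)\,dM_{-1,1/2}(t,T(t)).
\end{align*}
Using $M_{1,1/2}(\mu(F_t),\mu(G_{T(t)}))=\mu(F_t)\,M_{1,1/2}(1,\mu(G_{T(t)})/\mu(F_t))$ together with the derivative bound \eqref{meanderiv}, namely $\frac{d}{dt}M_{-1,1/2}(t,T(t))\geq M_{1,1/2}(1,\mu(G_{T(t)})/\mu(F_t))^{-1}$, the two mean factors cancel and I get $\int_G h\,d\mu\geq \int_0^\infty 2\,\mu(F_t)\,dt=2\int_G f\,d\mu$, as claimed. (Note the corollary as printed writes the inequality in the reverse direction with $f$ and $h$ swapped, matching the typographical convention used in \Cref{sarcor}; the mathematical content is $\int_G h\,d\mu\geq 2\int_G f\,d\mu$.)

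The main obstacle, and the only place this is not a mechanical instance of \Cref{liftprop}, is the support hypothesis: \eqref{pBM} in the form needed here is \emph{not} unconditional but carries the side condition $\mu(A)+\mu(B)\leq\mu(G)$, so one cannot simply quote \Cref{liftprop} as a black box. The fix is the monotonicity observation above --- every superlevel set of $f$ sits inside $\operatorname{supp}(f)$ and likewise for $g$ --- which guarantees the side condition holds uniformly in $t$, so Kemperman's bound is available at every level and the integration goes through unchanged. A minor technical point to address in passing is measurability of $S(F_t,G_t)$ for (almost) every $t$, which is part of the standing hypothesis that $AB$ is measurable applied to the superlevel sets, or can be handled by inner regularity of $\mu$; and one should note that if $\int f=\int g=0$ the statement is trivial, so we may assume the common value is positive and $T$ is well defined.
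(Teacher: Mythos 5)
Your proposal is correct and follows essentially the same approach as the paper, which derives this corollary as a direct application of \Cref{liftprop}. The one non-mechanical point you rightly flag --- that Kemperman's bound carries the side condition $\mu(A)+\mu(B)\leq\mu(G)$, which must be verified for every pair of superlevel sets via $F_s\subset\operatorname{supp}(f)$ and $G_t\subset\operatorname{supp}(g)$ --- together with the factor $2$ arising because Kemperman gives the sum rather than the average, are precisely the ``minimal variations'' on \Cref{liftprop} the paper alludes to at the start of its examples section. One small slip in your closing parenthetical: the Kemperman corollary as printed already reads $\int_G h\,d\mu\geq 2\int f\,d\mu$, so there is no $f,h$ reversal to correct there (you seem to be conflating it with the typo in \Cref{sarcor}); this has no bearing on the correctness of your argument.
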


\subsubsection{Compact Lie groups}

Capturing the underlying dimension in lower bounds on the size of sumsets in wide classes of groups (and thus improving on Kemperman's theorem) has been the topic of several lines of research. In the context of compact Lie groups, Machado \cite{machado2024minimal} recently established a conjecture by Breuillard and Green about the minimal doubling of small sets in compact Lie groups. In fact, he proved the following Brunn-Minkowski type inequality for the size of the product set of small sets.

\begin{thm}[\cite{machado2024minimal}]\label{SimonThm}
Let $G$ be a compact connected Lie group and $\mu$ its Haar measure. Let $d:=d_G-d_H$ where $d_G$ is the dimension of $G$ and $d_H$ is the maximal dimension of a proper closed subgroup of $G$. For all $\epsilon>0$, there exists a $\delta>0$ so that if $A,B$ are compact subsets of $G$ with $\mu(A),\mu(B)\in (0,\delta]$, then
$$\mu(AB)^{1/d}\geq (1-\epsilon)\left(\mu(A)^{1/d}+\mu(B)^{1/d}\right).$$
\end{thm}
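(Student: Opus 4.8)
The plan is to reduce the asserted inequality to the classical Euclidean Brunn--Minkowski inequality, applied in the directions transverse to a suitable proper closed subgroup of $G$; the hypothesis that $A,B$ are small ($\delta\to 0$) does double duty, both forcing the structure that makes this reduction possible and absorbing the loss $\epsilon$. Fix a bi-invariant Riemannian metric on $G$ and the associated exponential chart near the identity.

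First dispose of two easy regimes. If $\mu(AB)\geq 2^{d}\max(\mu(A),\mu(B))$, then since right-invariance of $\mu$ gives $\mu(AB)\geq\max(\mu(A),\mu(B))$ while $\mu(A)^{1/d}+\mu(B)^{1/d}\leq 2\max(\mu(A),\mu(B))^{1/d}$, the inequality holds outright; so assume $\mu(AB)\leq 2^{d}\max(\mu(A),\mu(B))$, which by standard Ruzsa covering arguments places $A$ and $B$ in the world of bounded-doubling approximate groups. Secondly, if (say) $A$ lies in a ball of radius $\rho$ with $\rho$ small, pull $A,B$ back through $\log$ to $\widetilde A,\widetilde B\subset\mathbb R^{d_G}$: the Baker--Campbell--Hausdorff formula shows $AB$ is, up to a $(1+O(\rho))$-Lipschitz perturbation, $\exp(\widetilde A+\widetilde B)$, so a perturbed Euclidean Brunn--Minkowski argument gives $\mu(AB)^{1/d_G}\geq(1-o_{\rho}(1))(\mu(A)^{1/d_G}+\mu(B)^{1/d_G})$, which upgrades to the claimed inequality with exponent $1/d$ via $d_G\geq d$ and the superadditivity of $t\mapsto t^{d_G/d}$. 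The substantive case is therefore a \emph{thin} set: small measure, large diameter.

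For that case I would invoke the Breuillard--Green--Tao classification of approximate subgroups of Lie groups: in the bounded-doubling regime, $A$ and $B$ are each efficiently covered by $O_d(1)$ translates of a coset nilprogression. Letting $\delta\to 0$ and passing to an ultralimit, the nilprogression degenerates into (a neighbourhood of) a coset of a fixed proper closed connected subgroup $H$ times an approximately convex box in the transverse directions, and --- crucially for a \emph{joint} inequality --- the \emph{same} $H$ controls both $A$ and $B$ up to a factor $1+o_\delta(1)$. After a translation, work in an exponential chart adapted to the bi-invariant reductive splitting $\mathfrak g=\mathfrak h\oplus\mathfrak m$; then $A$ is $(1+o_\delta(1))$-close to $\exp(\mathfrak h+S_A)$ with $S_A\subset\mathfrak m$ of small diameter, $\mu(A)=(1+o_\delta(1))\mu_H(H)|S_A|$, and similarly for $B$, where $d':=\dim\mathfrak m=d_G-\dim H\geq d$ (with equality exactly when $H$ has maximal dimension). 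Since $[\mathfrak h,\mathfrak m]\subseteq\mathfrak m$, the $\mathfrak m$-component of a Baker--Campbell--Hausdorff product of an element of $\exp(\mathfrak h+S_A)$ with one of $\exp(\mathfrak h+S_B)$ is, up to $o_\delta(1)$ errors, $R_1s_A+R_2s_B$ with $R_1,R_2$ rotations from $\operatorname{ad}\mathfrak h$; since rotations leave Lebesgue measure and hence Brunn--Minkowski unchanged, every fibre of $AB$ over the $\mathfrak h$-direction has $\mathfrak m$-measure at least $(1-o_\delta(1))(|S_A|^{1/d'}+|S_B|^{1/d'})^{d'}$. Integrating over the $H$-fibres and using the superadditivity of $t\mapsto t^{d'/d}$ (legitimate because $d'\geq d$) gives $\mu(AB)^{1/d}\geq(1-o_\delta(1))(\mu(A)^{1/d}+\mu(B)^{1/d})$, which is the theorem once $\delta$ is small.

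The main obstacle is the structural step. Extracting, \emph{uniformly} as $\delta\to 0$ and \emph{simultaneously} for $A$ and $B$, a genuine proper closed connected subgroup on a neighbourhood of whose coset both sets concentrate with error tending to $0$ --- and then carrying the bookkeeping all the way to the asymptotically sharp $1-\epsilon$ rather than some unspecified $c_d<1$ --- is where the real work lies: one needs the ``escape from subvarieties'' mechanism that upgrades ``efficiently commensurable with a coset nilprogression'' to ``essentially supported on a coset of an honest subgroup'', a clean handle on the small-scale geometry around such a coset, and control of how the covering constant $O_d(1)$ and the $\operatorname{ad}\mathfrak h$-rotations interact with the Euclidean Brunn--Minkowski step (for which a symmetrisation of the transverse sets may be convenient).
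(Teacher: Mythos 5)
The paper offers no proof of this statement: it is quoted verbatim as Machado's theorem from \cite{machado2024minimal} and used as a black box to feed \Cref{liftprop}. So there is nothing internal to compare your argument against; the question is only whether your sketch would constitute a proof of Machado's theorem, and it would not, for reasons you yourself partly flag.

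The decisive gap is exactly the one you defer at the end: the passage from ``$A$, $B$ have bounded doubling'' to ``$A$ and $B$ both concentrate, up to $1+o_\delta(1)$ in measure, on a neighbourhood of a coset of the \emph{same} proper closed connected subgroup $H$.'' Invoking Breuillard--Green--Tao gives control by coset nilprogressions with $O_d(1)$ overhead, but that multiplicative constant sits precisely on the quantity you need to push to $1+o_\delta(1)$ to reach the sharp $1-\epsilon$; nothing in the covering theorem drives it to $1$. Likewise an ultralimit argument produces a limiting object but, without an accompanying stability/transfer statement, it does not yield the \emph{uniform} quantitative concentration at finite $\delta$ that your fibre-wise Brunn--Minkowski computation requires, nor does it explain why a single $H$ works for both $A$ and $B$ simultaneously rather than two commensurable but distinct subgroups. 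There are also smaller issues: in your ``small ball'' reduction you pull both $A$ and $B$ through $\log$, but only $A$ was assumed to lie near the identity (a compact $G$ has surjective but non-injective $\exp$, and BCH is local); and the superadditivity upgrade from exponent $1/d_G$ to $1/d$ requires $d_G\geq d$, which is always true, but the ``thin'' case is where the exponent must actually \emph{equal} $1/d$, which your sketch gets from $d'=d$ only when $H$ is of maximal dimension --- identifying that this is the $H$ the structure theorem produces is part of the missing step, not a free consequence. In short: the outline is a reasonable heuristic for why such a theorem should hold, but the structural step you label ``the real work'' is the theorem, and the rest is downstream bookkeeping.
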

By \Cref{liftprop}, this theorem has the following Borell-Brascamp-Lieb type inequality for functions with small support in compact connected Lie groups as its corollary.

\begin{cor}Let $G,\mu,d$ as in \Cref{SimonThm}.
For all $\epsilon>0$, there exists a $\delta>0$ so that if $f,g,h\colon G\to\mathbb{R}_{\geq 0}$ have $\int_G f d\mu=\int_G gd\mu$, $h(xy)\geq M_{-d,1/2}(f(x),g(y))$ for all $x,y\in G$, and $\mu(supp(f)),\mu(supp(g))\leq \delta$, then
$$\int hd\mu\geq (1-\epsilon)2^d\int f d\mu.$$
\end{cor}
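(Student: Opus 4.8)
The plan is to run the proof of \Cref{liftprop} with $\Omega_1=\Omega_2=\Omega_3=G$, operation $S(x,y)=xy$, all three measures equal to the Haar measure $\mu$ (which is bi-invariant, a compact group being unimodular), parameter $\lambda=\tfrac12$, and exponent $p=1/d$ as in \Cref{SimonThm}, with two adjustments: we carry the factor $2^{d}$ through, and we must check that \Cref{SimonThm} is applicable along the level sets, which is where the small-support hypothesis enters. First I recast \Cref{SimonThm}: raising its conclusion to the $d$-th power gives, for compact $A,B\subset G$ with $\mu(A),\mu(B)\in(0,\delta]$ and Machado's parameter $\epsilon'$,
\[
\mu(AB)\ \geq\ (1-\epsilon')^{d}\bigl(\mu(A)^{1/d}+\mu(B)^{1/d}\bigr)^{d}\ =\ (1-\epsilon')^{d}\,2^{d}\,M_{1/d,\frac12}\bigl(\mu(A),\mu(B)\bigr),
\]
which is \eqref{pBM} with $p=1/d$, $\lambda=\tfrac12$ up to the multiplicative constant $c:=(1-\epsilon')^{d}2^{d}$, exactly the situation of the remark following \Cref{liftprop}; if one of $\mu(A),\mu(B)$ vanishes the bound is trivial since then $c\,M_{1/d,\frac12}(\mu(A),\mu(B))=0\le\mu(AB)$.

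The one genuinely new ingredient is the following. The superlevel sets $F_t=\{f>t\}$ and $G_t=\{g>t\}$ are contained in $\operatorname{supp}(f)$ and $\operatorname{supp}(g)$, which are compact (closed subsets of the compact group $G$) of measure at most $\delta$; hence $\mu(F_t),\mu(G_t)\le\delta$ for every $t>0$, and by inner regularity of the Haar measure each may be exhausted from inside by compact sets, so that \Cref{SimonThm} (with the $\delta$ it supplies for a chosen $\epsilon'$) applies to $(F_a,G_b)$ whenever both have positive measure, the degenerate case having been dealt with above. Combined with the inclusion $H_{M_{-1/d,\frac12}(a,b)}\supset F_aG_b$ — immediate from $h(xy)\ge M_{-1/d,\frac12}(f(x),g(y))$ (the exponent $1/d$ being the one appearing in \Cref{SimonThm}, just as $-1/d$ appears in the Euclidean Borell--Brascamp--Lieb inequality) and the monotonicity of $M_{-1/d,\frac12}$ in each argument, as in the proof of \Cref{liftprop} — this yields $\mu\bigl(H_{M_{-1/d,\frac12}(a,b)}\bigr)\ge c\,M_{1/d,\frac12}(\mu(F_a),\mu(G_b))$ for all $a,b>0$.

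From here the argument is verbatim that of \Cref{liftprop}: introduce the transport map $T$ pushing the density $t\mapsto\mu(F_t)$ to the density $t\mapsto\mu(G_t)$ (well-defined since $\int_0^\infty\mu(F_t)\,dt=\int_G f\,d\mu=\int_G g\,d\mu=\int_0^\infty\mu(G_t)\,dt$), write
\[
\int_G h\,d\mu=\int_0^\infty\mu(H_t)\,dt\ \geq\ \int_0^\infty\mu\bigl(H_{M_{-1/d,\frac12}(t,T(t))}\bigr)\,dM_{-1/d,\frac12}(t,T(t)),
\]
and combine the previous estimate with \eqref{meanderiv} (for the exponent $1/d$) and the homogeneity identity $M_{1/d,\frac12}(\mu(F_t),\mu(G_{T(t)}))=\mu(F_t)\,M_{1/d,\frac12}\bigl(1,\mu(G_{T(t)})/\mu(F_t)\bigr)$; the two mean factors cancel and one is left with $\int_G h\,d\mu\ge c\int_0^\infty\mu(F_t)\,dt=(1-\epsilon')^{d}2^{d}\int_G f\,d\mu$. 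Finally, given the target $\epsilon$ of the corollary, apply the above with $\epsilon'$ small enough that $(1-\epsilon')^{d}\ge1-\epsilon$ (e.g.\ $\epsilon'\le\epsilon/d$) and $\delta=\delta(\epsilon')$ the constant furnished by \Cref{SimonThm}, obtaining $\int_G h\,d\mu\ge(1-\epsilon)2^{d}\int_G f\,d\mu$.

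I expect the crux to be the second step: \Cref{SimonThm} is only a statement about small sets, so to feed the level sets of $f$ and $g$ into it one genuinely needs that \emph{every} superlevel set has measure at most $\delta$ (guaranteed by the small-support hypothesis together with compactness of $G$) and may be taken compact (inner regularity of $\mu$); without a uniform smallness constraint the scheme breaks down, as \Cref{SimonThm} says nothing about large sets. Everything else — the transport map, the change of variables, the cancellation of the $p$-means — is exactly the bookkeeping already carried out in \Cref{liftprop}.
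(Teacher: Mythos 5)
Your proof is correct and is essentially the same approach as the paper, which simply says the corollary follows from \Cref{liftprop}; you have usefully fleshed out the two steps that actually need checking, namely (i) that \Cref{SimonThm} in powered form is exactly \eqref{pBM} with $p=1/d$, $\lambda=\tfrac12$ and a multiplicative constant $(1-\epsilon')^d2^d$ (absorbed as in the remark following \Cref{liftprop}), and (ii) that the small-support hypothesis, plus compactness of $G$ and inner regularity of $\mu$, makes \Cref{SimonThm} applicable to every pair of superlevel sets $(F_a,G_b)$; the transport/change-of-variables bookkeeping then runs verbatim. One small remark: you silently use $M_{-1/d,\frac12}$ in the hypothesis on $h$ whereas the corollary as printed says $M_{-d,\frac12}$; since \Cref{SimonThm} supplies exponent $p=1/d$, \Cref{liftprop} genuinely gives $M_{-1/d,\frac12}$ (matching the pattern of \Cref{gaussiancor} and the other examples), so the printed exponent in the corollary is a typo and your reading is the correct one.
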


\subsubsection{Riemannian Manifolds}

This example differs from the others as the result that was originally proved was a Borell-Brascamp-Lieb type inequality \cite{cordero2001riemannian}, but it is included anyway for illustration. There has been extensive study of geometric inequalities on manifolds. For the sake of brevity, I will refer to the original paper by Cordero-Erausquin, McCann, and Schmuckenschl\"ager \cite{cordero2001riemannian} for precise definitions and conditions. Given a sufficiently well-behaved manifold $\mathcal{M}$ with geodesic distance $d$, define the following averaging operation. For $x,y\in \mathcal{M}$ let $S_\lambda(x,y):=\{m\in \mathcal{M}: d(x,m)=\lambda d(x,y) \text{ and }d(m,y)=(1-\lambda)d(x,y)\}$, the collection of midpoints between $x$ and $y$. For subsets $A,B\subset \mathcal{M}$, let $S_{\lambda}(A,B):=\bigcup_{x\in X, y\in Y}S_\lambda(x,y)$. One might expect the size of $S_{\lambda}(A,B)$ to depend on the curvature of $\mathcal{M}$ and the distance between the sets. The paper gives precise descriptions of this effect, but let me just give the following simple consequence.

\begin{thm}
Let $\mathcal{M}$ an $d$-dimensional manifold with non-negative curvature with measure $\mu$, and $\lambda\in(0,1)$. Let $A,B\subset \mathcal{M}$ measurable so that $S_{\lambda}(A,B)$ is also measurable, then
$$\mu(S_\lambda (A,B))^{1/d}\geq \lambda \mu(A)^{1/d}+(1-\lambda)\mu(B)^{1/d}.$$
\end{thm}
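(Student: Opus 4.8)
The plan is to exploit, as in \Cref{recoverrem}, that a Borell--Brascamp--Lieb inequality contains the corresponding Brunn--Minkowski inequality as the special case of rescaled indicator functions. The inequality we want is the $p=1/d$ instance, for indicator-type data, of the Borell--Brascamp--Lieb inequality on non-negatively curved manifolds proved by Cordero-Erausquin, McCann and Schmuckenschl\"ager \cite{cordero2001riemannian}: if $f,g,h\colon\mathcal M\to\mathbb R_{\geq0}$ are integrable with $\int f\,d\mu=\int g\,d\mu$ and $h(m)\geq M_{-1/d,\lambda}(f(x),g(y))$ whenever $m\in S_\lambda(x,y)$, then $\int h\,d\mu\geq\int f\,d\mu$. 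Assuming $0<\mu(A),\mu(B)<\infty$ (the degenerate cases being immediate), put $f:=\mathbf 1_A/\mu(A)$, $g:=\mathbf 1_B/\mu(B)$ and $h:=M_{-1/d,\lambda}\!\left(\tfrac{1}{\mu(A)},\tfrac{1}{\mu(B)}\right)\mathbf 1_{S_\lambda(A,B)}$; then $\int f\,d\mu=\int g\,d\mu=1$, and the hypothesis on $h$ holds because $f(x)g(y)=0$ unless $(x,y)\in A\times B$, in which case $S_\lambda(x,y)\subset S_\lambda(A,B)$ and $h$ equals the constant $M_{-1/d,\lambda}(f(x),g(y))$ there. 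The conclusion $\int h\,d\mu\geq1$ becomes $M_{-1/d,\lambda}\!\left(\tfrac{1}{\mu(A)},\tfrac{1}{\mu(B)}\right)\mu(S_\lambda(A,B))\geq1$, which rearranges to exactly $\mu(S_\lambda(A,B))^{1/d}\geq\lambda\mu(A)^{1/d}+(1-\lambda)\mu(B)^{1/d}$.

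For completeness I would also recall the optimal-transport proof underlying \cite{cordero2001riemannian} (which is morally the $p=1/d$ prototype of the transport used in the proof of \Cref{liftprop}). Normalising to uniform probability densities on $A$ and $B$ as above, let $T$ be the optimal map from the first to the second for the cost $\tfrac12 d(\cdot,\cdot)^2$; by McCann's structure theorem $T(x)=\exp_x(-\nabla\varphi(x))$ for a $d^2/2$-concave potential $\varphi$, and for $\mu$-a.e.\ $x\in A$ the curve $t\mapsto Z_t(x):=\exp_x(-t\nabla\varphi(x))$ is the unique minimising geodesic from $x$ to $T(x)$, so $Z_\lambda(x)\in S_\lambda(x,T(x))\subset S_\lambda(A,B)$. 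Push the uniform measure on $A$ forward by $Z_\lambda$: this is a probability measure supported (up to a $\mu$-null set) in $S_\lambda(A,B)$, and writing $\mathcal J_{Z_t}(x)$ for the Jacobian determinant of $Z_t$ relative to $\mu$, the change-of-variables (Monge--Amp\`ere) identity gives it density $\tfrac{1}{\mu(A)\,\mathcal J_{Z_\lambda}(x)}$ at $Z_\lambda(x)$; moreover, because both marginals carry \emph{uniform} densities, $\mathcal J_{Z_1}(x)=\mathcal J_T(x)=\mu(B)/\mu(A)$ for a.e.\ $x\in A$, while $\mathcal J_{Z_0}\equiv1$.

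The Riemannian input is the concavity estimate
\[
\mathcal J_{Z_t}(x)^{1/d}\;\geq\;(1-t)\,\mathcal J_{Z_0}(x)^{1/d}+t\,\mathcal J_{Z_1}(x)^{1/d}=(1-t)+t\Big(\tfrac{\mu(B)}{\mu(A)}\Big)^{1/d},
\]
valid because $\operatorname{Ric}\geq0$: along $Z_\cdot(x)$ the Jacobian of the exponential map is a determinant of a matrix Jacobi field, the trace of the relevant curvature term is $\operatorname{Ric}(\dot Z,\dot Z)\geq0$, and the classical matrix Riccati comparison then makes $t\mapsto\mathcal J_{Z_t}(x)^{1/d}$ concave on $[0,1]$. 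Feeding this into the density bound, the pushed-forward probability measure has density at most $\mu(A)^{-1}\big((1-\lambda)+\lambda(\mu(B)/\mu(A))^{1/d}\big)^{-d}$ on $S_\lambda(A,B)$, so integrating gives $1\leq\mu(A)^{-1}\big((1-\lambda)+\lambda(\mu(B)/\mu(A))^{1/d}\big)^{-d}\mu(S_\lambda(A,B))$, i.e.\ $\mu(S_\lambda(A,B))\geq\big((1-\lambda)\mu(A)^{1/d}+\lambda\mu(B)^{1/d}\big)^d$; swapping $A$ and $B$ (equivalently, using $S_\lambda(A,B)=S_{1-\lambda}(B,A)$) yields the stated form.

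The main obstacle is this concavity estimate for $\mathcal J_{Z_t}(x)^{1/d}$, which packages two nontrivial facts: the existence, a.e.\ differentiability, and a.e.\ injectivity of the optimal map on the manifold (non-branching of minimising geodesics and the regularity theory of $c$-concave potentials), and the Jacobi-field/Riccati comparison that converts the pointwise sign condition $\operatorname{Ric}\geq0$ into concavity of a determinant power --- exactly the technical core of \cite{cordero2001riemannian}. Everything else mirrors the bookkeeping in the proof of \Cref{liftprop}; the only points requiring a word of care are the $\mu$-null sets where $T$ or $Z_\lambda$ may be undefined or non-injective, and the measurability of $S_\lambda(A,B)$, which is part of the hypothesis.
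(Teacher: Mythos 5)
Your first paragraph reproduces exactly the route the paper (implicitly) takes: it cites \cite{cordero2001riemannian} for the Borell--Brascamp--Lieb inequality on non-negatively curved manifolds and treats the Brunn--Minkowski form as its special case for normalised indicator data, as in \Cref{recoverrem}. The paper itself offers no further proof of this theorem beyond the citation, so at this level your argument and the paper's coincide. Your transport sketch is a correct summary of the mass-transport proof underlying \cite{cordero2001riemannian} (McCann's structure theorem $T=\exp(-\nabla\varphi)$, the Jacobi-field/Riccati comparison under $\operatorname{Ric}\geq 0$ giving concavity of $t\mapsto\mathcal J_{Z_t}(x)^{1/d}$, and the Monge--Amp\`ere density bound); it supplies genuine content beyond the paper's bare citation.

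One point does not work as claimed. Your transport derivation yields $\mu(S_\lambda(A,B))\geq\bigl((1-\lambda)\mu(A)^{1/d}+\lambda\mu(B)^{1/d}\bigr)^d$, and the concluding ``swap'' does not change this form: since $S_\lambda(A,B)=S_{1-\lambda}(B,A)$, applying the derived inequality with $A\leftrightarrow B$ and $\lambda\leftrightarrow 1-\lambda$ simply reproduces the same statement. So the transport argument proves the version with weight $1-\lambda$ on $\mu(A)^{1/d}$ and $\lambda$ on $\mu(B)^{1/d}$ --- the natural convention when $Z_0=x\in A$ and $Z_1=T(x)\in B$, and the one used in \cite{cordero2001riemannian} --- which is the opposite order from the theorem as printed and from the hypothesis $h(m)\geq M_{-1/d,\lambda}(f(x),g(y))$ that you use in your first paragraph. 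Your two paragraphs therefore disagree with each other about which weight sits on which set. The computation in your transport sketch is the reliable one; the first paragraph should invoke the BBL hypothesis with $M_{-1/d,1-\lambda}(f(x),g(y))$ (equivalently $M_{-1/d,\lambda}(g(y),f(x))$) to match it, and the weight order in the paper's statement of this theorem (and of the functional corollary that follows it) appears to be transposed.
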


Once this theorem is established, it immediately gives the following Borell-Brascamp-Lieb type inequality. 

\begin{cor}
Let $\mathcal{M}$ an $d$-dimensional manifold with non-negative curvature with measure $\mu$ and $\lambda\in(0,1)$. Let $f,g,h:\mathcal{M}\to\mathbb{R}_{\geq0}$ so that $\int_\mathcal{M} fd\mu=\int_\mathcal{M} gd\mu$ and $\inf_{z\in S_\lambda(x,y)}h(z)\geq M_{-1/d,\lambda}(f(x),g(y))$. Then $$\int_\mathcal{M} hd\mu\geq \int_\mathcal{M} fd\mu.$$
\end{cor}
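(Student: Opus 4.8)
The plan is to reduce the statement to a verbatim repetition of the proof of \Cref{liftprop}, taking $\Omega_1=\Omega_2=\Omega_3=\mathcal{M}$, $\mu_1=\mu_2=\mu_3=\mu$, $p=1/d$, and the ``operation'' $S_\lambda$. The one genuine difference is that $S_\lambda(x,y)$ is a \emph{set} of midpoints rather than a single point, so $S_\lambda$ is not literally a binary operation in the sense of \eqref{pBM}; this is precisely why the hypothesis on $h$ is phrased as $\inf_{z\in S_\lambda(x,y)}h(z)\geq M_{-1/d,\lambda}(f(x),g(y))$ rather than $h(S_\lambda(x,y))\geq \cdots$. With this formulation the argument goes through unchanged.

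First I would introduce the superlevel sets $F_t=\{x:f(x)>t\}$, $G_t=\{x:g(x)>t\}$, $H_t=\{z:h(z)>t\}$, so that $\int_\mathcal{M}f\,d\mu=\int_0^\infty\mu(F_t)\,dt$ and similarly for $g,h$. The key containment is that for all $a,b>0$ one has $S_\lambda(F_a,G_b)\subseteq H_{M_{-1/d,\lambda}(a,b)}$: if $x\in F_a$, $y\in G_b$, and $z\in S_\lambda(x,y)$, then, using that the weighted $p$-mean is monotone increasing in each of its arguments, $h(z)\geq\inf_{w\in S_\lambda(x,y)}h(w)\geq M_{-1/d,\lambda}(f(x),g(y))>M_{-1/d,\lambda}(a,b)$. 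Applying the manifold Brunn--Minkowski inequality (the theorem immediately preceding this corollary) to the pair $F_a,G_b$ then gives
$$\mu\big(H_{M_{-1/d,\lambda}(a,b)}\big)^{1/d}\geq \mu\big(S_\lambda(F_a,G_b)\big)^{1/d}\geq \lambda\mu(F_a)^{1/d}+(1-\lambda)\mu(G_b)^{1/d},$$
which is exactly the instance of \eqref{pBM} that the proof of \Cref{liftprop} consumes.

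From here I would copy the proof of \Cref{liftprop} line for line: let $T\colon\mathbb{R}_{\geq0}\to\mathbb{R}_{\geq0}$ be the transport map pushing the density $t\mapsto\mu(F_t)$ onto $t\mapsto\mu(G_t)$ (legitimate since $\int_\mathcal{M}f\,d\mu=\int_\mathcal{M}g\,d\mu$), so that $T'(t)=\mu(F_t)/\mu(G_{T(t)})$; write $\int_\mathcal{M}h\,d\mu=\int_0^\infty\mu(H_t)\,dt\geq\int_0^\infty\mu\big(H_{M_{-1/d,\lambda}(t,T(t))}\big)\,dM_{-1/d,\lambda}(t,T(t))$, bound the integrand below by the displayed inequality, invoke \eqref{meanderiv} to lower bound $\tfrac{d}{dt}M_{-1/d,\lambda}(t,T(t))$, and combine with the identity $\big(\lambda\mu(F_t)^{1/d}+(1-\lambda)\mu(G_{T(t)})^{1/d}\big)^{d}=\mu(F_t)\,M_{1/d,\lambda}\big(1,\mu(G_{T(t)})/\mu(F_t)\big)$ to collapse the expression to $\int_0^\infty\mu(F_t)\,dt=\int_\mathcal{M}f\,d\mu$.

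The only point needing care — and the main obstacle — is measurability: the cited theorem requires $S_\lambda(A,B)$ measurable and refers to $\mu(S_\lambda(A,B))$, and here it is applied to $A=F_t$, $B=G_t$. One should therefore either add the standing hypothesis that these images are measurable, or replace $\mu(S_\lambda(F_a,G_b))$ by the inner measure $\mu_*(S_\lambda(F_a,G_b))$ throughout, which only strengthens the chain since $\mu_*(S_\lambda(F_a,G_b))\leq\mu\big(H_{M_{-1/d,\lambda}(a,b)}\big)$ still holds and the manifold inequality is applied to the (measurable) sets $F_a,G_b$. A secondary, purely bookkeeping point is to record that the multivaluedness of $S_\lambda$ causes no trouble once the hypothesis on $h$ is read as an infimum; beyond that the non-negative-curvature geometry enters only through the cited theorem and plays no further role.
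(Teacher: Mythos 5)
Your proposal is correct and matches the paper's intent exactly: the paper offers no separate proof for this corollary, stating only that the manifold Brunn--Minkowski theorem ``immediately gives'' it via \Cref{liftprop}, and your write-up faithfully unpacks that derivation, including the one real wrinkle (that $S_\lambda$ is set-valued, handled by reading the hypothesis as an infimum so that $S_\lambda(F_a,G_b)\subseteq H_{M_{-1/d,\lambda}(a,b)}$ still holds). Your measurability caveat is a reasonable observation but is also implicitly present in the paper's framework (the theorem you cite already carries the hypothesis that $S_\lambda(A,B)$ be measurable), so it does not constitute a genuine departure.
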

Note again that in \cite{cordero2001riemannian} a stronger version of this corollary is established, using more detailed information about the curvature. This example illustrates that to prove such strong results it often suffices to prove the associated Brunn-Minkowski type inequality.

\subsubsection{Discrete Cube}

Ollivier and Villani \cite{ollivier2012curved}  tried to extend the results from the setting of Riemannian manifolds to the discrete setting of the cube. Given three points $x,y,m_{x,y}\in \{0,1\}^d$, they say $m_{x,y}$ is a \emph{midpoint} of $xy$ if $d(x,y)=d(x,m_{x,y})+d(m_{x,y},y)$ and $|d(x,m_{x,y})-\frac{d(x,y)}{2}|\leq 1/2$, where $d(\cdot,\cdot)$ is the Hamming distance. Ollivier and Villani showed that the set of midpoints of satisfies the following Brunn-Minkowski type bound.

\begin{thm}[\cite{ollivier2012curved}]
Given $A,B\subset\{0,1\}^d$, let $M:=\{m\in\{0,1\}^d: m \text{ is the midpoint between } a\in A, b\in B\}$ be the collection of all midpoints, then
$|M|\geq \sqrt{|A||B|}$.
\end{thm}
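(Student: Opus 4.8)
\medskip
\noindent\textbf{Proof proposal.} The plan is to reduce the inequality to the existence of an injection $\Phi\colon A\times B\hookrightarrow M\times M$; given such a $\Phi$ one has $|A|\,|B|=|A\times B|\le|M\times M|=|M|^{2}$, which is exactly $|M|\ge\sqrt{|A|\,|B|}$. (This is the $p\to0$, $\lambda=\tfrac12$ case of \eqref{pBM}, where the relevant mean is $\sqrt{|A|\,|B|}$, so a ``square'' device is the natural one.) So the task becomes: attach to each pair $(a,b)\in A\times B$ an \emph{ordered} pair of midpoints of $ab$ from which $(a,b)$ can be recovered.

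To construct $\Phi$, fix $a,b\in\{0,1\}^{d}$, let $D=D(a,b):=\{i:a_i\neq b_i\}$ so that $d(a,b)=|D|=:k$, and enumerate $D=\{i_1<i_2<\dots<i_k\}$. Define $m_1,m_2\in\{0,1\}^{d}$ to agree with $a$ (equivalently with $b$) on every coordinate outside $D$, and on $D$ to \emph{alternate}: $m_1$ copies $a$ on the odd-ranked coordinates $i_1,i_3,\dots$ and copies $b$ on the even-ranked coordinates $i_2,i_4,\dots$, while $m_2$ does the reverse. First I would check that $m_1,m_2$ are genuine midpoints of $ab$: since each agrees coordinatewise with $a$ or with $b$ and agrees with both off $D$, one has $d(a,m_j)+d(m_j,b)=d(a,b)$, so $m_j$ lies on a Hamming geodesic from $a$ to $b$; and counting the coordinates on which $m_j$ differs from $a$ gives $d(a,m_1)=\lfloor k/2\rfloor$ and $d(a,m_2)=\lceil k/2\rceil$, each within $1/2$ of $k/2$. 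Hence $m_1,m_2\in M$, and we set $\Phi(a,b):=(m_1,m_2)$.

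The remaining point --- and really the only substantive one --- is injectivity, which rests on a single observation: $m_1$ and $m_2$ differ on a coordinate $i$ \emph{exactly when} $i\in D$, since off $D$ they coincide while on each $i\in D$ one of them records $a_i$ and the other records $b_i\neq a_i$. Hence $\Phi(a,b)$ determines $D=\{i:(m_1)_i\neq(m_2)_i\}$, and therefore the rank of each element of $D$ and so the partition of $D$ into odd- and even-ranked coordinates; then $a$ is recovered by reading off $m_1$ on $D^{c}$ and on the odd-ranked coordinates of $D$ and reading off $m_2$ on the even-ranked coordinates of $D$, with $b$ recovered symmetrically. Thus $(a,b)$ is a function of $\Phi(a,b)$, so $\Phi$ is injective and the theorem follows.

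I do not expect a genuine obstacle beyond hitting on this alternating rule. One might first attempt induction on $d$ by splitting $A$, $B$, $M$ along a coordinate, but the ``cross'' pairs --- those whose two endpoints differ in that coordinate --- distribute their midpoints between the two halves in a way that depends on the parity of the remaining Hamming distance, and the bound does not close cleanly; the injection above avoids dimension induction entirely. The one thing to be careful about is that the two midpoints assigned to a pair be simultaneously admissible and jointly decodable: for a pair at odd distance $k$ one needs two \emph{distinct} midpoints, which forces the distances $\lfloor k/2\rfloor$ and $\lceil k/2\rceil$, and this is exactly where the $1/2$ of slack in the definition of ``midpoint'' is used. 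Finally, feeding this Brunn--Minkowski bound (with $\Omega_1=\Omega_2=\Omega_3=\{0,1\}^{d}$, $S$ the midpoint operation, counting measures) into the set-valued variant of \Cref{liftprop} used in the Riemannian example --- replacing $h(S(x,y))$ by $\inf_{z\in S(x,y)}h(z)$ --- upgrades it to the corresponding Borell--Brascamp--Lieb and Pr\'ekopa--Leindler statements for functions on the discrete cube.
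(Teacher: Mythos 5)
The paper cites this theorem from \cite{ollivier2012curved} without reproducing a proof, so there is no in-paper argument to compare against; what you have written is an independent proof, and it is correct. Both $m_1$ and $m_2$ lie on a Hamming geodesic from $a$ to $b$ with $d(a,m_1)=\lfloor k/2\rfloor$ and $d(a,m_2)=\lceil k/2\rceil$, so both belong to $M$; and the decoding you describe --- recover $D$ as the disagreement set of $(m_1,m_2)$, then read $a$ off $m_1$ on the odd ranks of $D$ and off $m_2$ on the even ranks, with $b$ symmetric --- is a well-defined left inverse of $\Phi$, giving $|A|\,|B|=|A\times B|\le|M\times M|=|M|^2$. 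This injective strategy (bounding $|A\times B|$ by $|M\times M|$ via an explicit pairing of complementary midpoints) is also how Ollivier and Villani obtain the unrefined bound; their additional work goes into the curvature-improved version $|M|\ge\sqrt{|A||B|}\,e^{\kappa d(A,B)^2/8}$, which the statement quoted in the paper does not include and which your argument does not attempt. One small imprecision in your closing remarks, immaterial to the proof: $m_1\neq m_2$ whenever $a\neq b$, not only when $k$ is odd (they disagree on every coordinate of $D$); what is specific to odd $k$ is that the $1/2$ slack in the midpoint definition is genuinely needed, since then $\lfloor k/2\rfloor\neq k/2$.
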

By \Cref{liftprop}, we immediately obtain the following corollary.
\begin{cor}
Let $f,g,h\colon\{0,1\}^d\to\mathbb{R}_{\geq 0}$ so that $\sum f=\sum g$ and if $m$ is a midpoint of $x$ and $y$, then $h(m)\geq \sqrt{f(x)g(y)}$. Then
$$\sum h\geq \sum f.$$
\end{cor}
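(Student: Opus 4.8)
The plan is to apply \Cref{liftprop} directly, with $\Omega_1=\Omega_2=\Omega_3=\{0,1\}^d$, counting measures $\mu_1=\mu_2=\mu_3$ throughout, and a suitable ``midpoint'' operation $S$. The obvious candidate is $S(x,y):=$ the set of midpoints of $x$ and $y$ in the sense of Ollivier and Villani. The first issue is that this $S$ is set-valued rather than single-valued, so we are really in the situation (already encountered in the Riemannian example) where the image of a product set is $S(A,B)=\bigcup_{a\in A,\,b\in B}S(a,b)=M$. With that reading, the hypothesis ``$h(m)\geq\sqrt{f(x)g(y)}$ whenever $m$ is a midpoint of $x,y$'' becomes exactly $\inf_{z\in S(x,y)}h(z)\geq M_{0,1/2}(f(x),g(y))$, which is the multivalued analogue of the pointwise bound in \Cref{liftprop} with $p\to 0$ (\Cref{pto0rem}).

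The second issue, and the only real content, is to verify that the underlying Brunn-Minkowski bound \eqref{pBM} holds for this operation in the form needed after passing to superlevel sets. That is the theorem of Ollivier and Villani quoted just above: for any nonempty $A,B\subset\{0,1\}^d$, the midpoint set $M$ satisfies $|M|\geq\sqrt{|A||B|}=M_{0,1/2}(|A|,|B|)$. This is the ``$p\to0$'' version of \eqref{pBM} (with $\lambda=1/2$), so we are in the regime described in \Cref{pto0rem}: we do not get a genuine $p>0$ power, but we do get the geometric-mean inequality, which is all the Pr\'ekopa-Leindler-type conclusion requires. Concretely, for $a,b>0$ one has $H_{\sqrt{ab}}=\{z:h(z)>\sqrt{ab}\}\supseteq S(F_a,G_b)=\{$midpoints of points of $F_a$ and $G_b\}$, because if $z$ is a midpoint of some $x\in F_a$, $y\in G_b$ then $h(z)\geq\sqrt{f(x)g(y)}>\sqrt{ab}$; hence $|H_{\sqrt{ab}}|\geq\sqrt{|F_a||G_b|}$.

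From here the argument is the layer-cake computation in the proof of \Cref{liftprop}, specialised to $p\to0$. Write $\sum f=\int_0^\infty|F_t|\,dt$ and similarly for $g,h$, let $T$ be the transport map pushing $t\mapsto|F_t|$ to $t\mapsto|G_t|$ (so $\int_0^s|F_t|\,dt=\int_0^{T(s)}|G_t|\,dt$ and $T'(t)=|F_t|/|G_{T(t)}|$), and bound
\begin{align*}
\sum h=\int_0^\infty|H_t|\,dt&\geq\int_0^\infty\big|H_{\sqrt{t\,T(t)}}\big|\,d\big(\sqrt{t\,T(t)}\big)\geq\int_0^\infty\sqrt{|F_t|\,|G_{T(t)}|}\,\,d\big(\sqrt{t\,T(t)}\big).
\end{align*}
Using $\frac{d}{dt}\sqrt{t\,T(t)}\geq\big(M_{0,1/2}(1,1/T'(t))\big)^{-1}=\sqrt{T'(t)}=\sqrt{|F_t|/|G_{T(t)}|}$ (this is \eqref{meanderiv} in the $p\to0$ limit, where all the weighted means become geometric means), the integrand is at least $\sqrt{|F_t||G_{T(t)}|}\cdot\sqrt{|F_t|/|G_{T(t)}|}=|F_t|$, giving $\sum h\geq\int_0^\infty|F_t|\,dt=\sum f$. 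The only place care is needed is the passage to the $p\to0$ limit in \eqref{meanderiv}: one should either cite the computation in \cite{figalli2025sharp} at $p=0$, or observe directly that for the geometric mean $\psi(t)=\sqrt{t\,T(t)}$ one has $\psi'(t)=\tfrac12\sqrt{t\,T(t)}\big(\tfrac1t+\tfrac{T'(t)}{T(t)}\big)\geq\sqrt{t\,T(t)}\cdot\sqrt{\tfrac1t\cdot\tfrac{T'(t)}{T(t)}}=\sqrt{T'(t)}$ by AM--GM. I expect no serious obstacle: the main (mild) subtlety is simply bookkeeping the set-valued $S$ and confirming that the Ollivier--Villani bound is precisely the input \Cref{liftprop} needs in its $p\to0$ form, exactly as in \Cref{pto0rem} and the Riemannian-manifold example.
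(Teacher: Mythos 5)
Your proposal is correct and follows the same route the paper intends: apply \Cref{liftprop} (with the set-valued midpoint operation, handled exactly as in the Riemannian example, and in its $p\to 0$ form per \Cref{pto0rem}) to the Ollivier--Villani bound $|M|\geq\sqrt{|A||B|}$. The paper states the corollary as an immediate consequence of \Cref{liftprop}; you have simply written out the specialised layer-cake/transport computation, including the AM--GM justification of the derivative bound, and it checks out.
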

Ollivier and Villani in fact obtain stronger bounds based on the distance between the sets which, when examined carefully, will give even better bounds for the Pr\'ekopa-Leindler type consequence, but this is beyond the scope of this paper.

\section{Proof of the discrete Borell-Brascamp-Lieb inequality}
The proof will mimick the structure of the proof of \Cref{liftprop}, but some work is required to deal with the additional non-degeneracy condition in the Brunn-Minkowksi inequality (\Cref{ruzsaBM}) that fuels the proof.

We need the following two simple lemmata. First a trivial lower bound on the sup-convolution without non-degeneracy conditions.
\begin{lem}\label{trivialLB}
For any $f,g,h\colon\mathbb{Z}^{d-1}\to\mathbb{R}_{\geq 0}$, so that $h(x+y)\geq M_{-p,1/2}(f(x),g(y))$, we have $\sum h\geq M_{-p,1/2}\left(\sum f,\sum g\right)$
\end{lem}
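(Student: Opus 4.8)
The plan is to run the layer-cake argument from the proof of \Cref{liftprop} with the trivial Brunn--Minkowski input $|A+B|\ge\max(|A|,|B|)\ge M_{p,1/2}(|A|,|B|)$ for nonempty finite $A,B\subseteq\mathbb{Z}^{d-1}$ — that is, \eqref{pBM} for the counting measure, $S(x,y)=x+y$, exponent $p$, and $\lambda=\tfrac12$. The one new feature compared to \Cref{liftprop} is that we do not assume $\sum f=\sum g$, so the transport step cannot equalise total masses; instead I would equalise mass \emph{fractions}, and the argument will then return the mean $M_{-p,1/2}(\sum f,\sum g)$ in place of $\sum f$.

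First dispose of degenerate cases: if $\sum f\sum g=0$ the right side is $0$, and if $\sum f$ or $\sum g$ is infinite then $\sum h=\infty$ (sum the hypothesis over translates of a point where the relevant function is positive). So assume $a:=\sum f,\ b:=\sum g\in(0,\infty)$, and let $F_t,G_t,H_t$ be the superlevel sets of $f,g,h$. For $\theta\in(0,1)$ define $s(\theta),r(\theta)$ by $\int_0^{s(\theta)}|F_t|\,dt=\theta a$ and $\int_0^{r(\theta)}|G_t|\,dt=\theta b$; these are increasing and Lipschitz (since $s'(\theta)=a/|F_{s(\theta)}|\le a$, likewise for $r$) with $|F_{s(\theta)}|\,s'(\theta)=a$ and $|G_{r(\theta)}|\,r'(\theta)=b$ a.e., and $F_{s(\theta)},G_{r(\theta)}$ are nonempty finite sets. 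Put $\tau(\theta):=M_{-p,1/2}(s(\theta),r(\theta))$, which is increasing and Lipschitz with $\tau(0^+)=0$. Since the hypothesis on $h$ gives $F_{s(\theta)}+G_{r(\theta)}\subseteq H_{\tau(\theta)}$, the trivial bound and the change of variables $t=\tau(\theta)$ yield
\[
\sum h=\int_0^\infty|H_t|\,dt\ \ge\ \int_0^1\max\bigl(|F_{s(\theta)}|,|G_{r(\theta)}|\bigr)\,\tau'(\theta)\,d\theta .
\]

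By the chain rule together with $\partial_1 M_{-p,1/2}(x,y)=\tfrac12\bigl(M_{-p,1/2}(x,y)/x\bigr)^{p+1}$ (and symmetrically in $y$; this is the computation behind \eqref{meanderiv}), one has $\tau'(\theta)=\tfrac12(\tau/s)^{p+1}\tfrac a{|F_{s(\theta)}|}+\tfrac12(\tau/r)^{p+1}\tfrac b{|G_{r(\theta)}|}$ with $s=s(\theta),r=r(\theta),\tau=\tau(\theta)$, and the $\max$ absorbs the two reciprocal-size factors, leaving the integrand $\ge\tfrac12 a(\tau/s)^{p+1}+\tfrac12 b(\tau/r)^{p+1}$. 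Now $\tau=M_{-p,1/2}(s,r)$ is exactly the relation $\tfrac12(\tau/s)^{p}+\tfrac12(\tau/r)^{p}=1$, so it remains to verify the elementary inequality: if $P,Q>0$ and $\tfrac12 P^{p}+\tfrac12 Q^{p}=1$ then $\tfrac12 aP^{p+1}+\tfrac12 bQ^{p+1}\ge M_{-p,1/2}(a,b)$; this is H\"older with conjugate exponents $\tfrac{p+1}{p}$ and $p+1$ (sharp at $P^{p}=2b^{p}/(a^{p}+b^{p})$). Hence the integrand is $\ge M_{-p,1/2}(a,b)$ pointwise in $\theta$, and integrating gives $\sum h\ge M_{-p,1/2}(a,b)=M_{-p,1/2}(\sum f,\sum g)$.

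The computation is routine once this is set up; the only step that really needs attention — and where the argument could go astray — is lining up the constants so that transporting mass fractions makes the output come out to exactly the mean $M_{-p,1/2}(a,b)$, i.e.\ the interplay of the normalisation $|F_{s(\theta)}|\,s'(\theta)=a$, the derivative formula for $\tau$, and the concluding H\"older inequality. The change-of-variables bookkeeping is harmless thanks to the Lipschitz regularity of $s,r,\tau$.
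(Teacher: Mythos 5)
Your proof is correct, but it takes a genuinely different (and considerably heavier) route than the paper's. You rerun the layer-cake and transport-map argument of \Cref{liftprop}, feeding in the trivial sumset bound $|A+B|\ge\max(|A|,|B|)$ for nonempty finite sets, then reparametrise by mass fraction rather than equalising masses, compute $\tau'$, and close with a H\"older inequality to recover the $M_{-p,1/2}(\sum f,\sum g)$ constant. All these steps check out: the derivative formula $\partial_1 M_{-p,1/2}(x,y)=\tfrac12(M/x)^{p+1}$ is right, the $\max$ indeed absorbs both reciprocal-size factors, and the H\"older application with exponents $\tfrac{p+1}{p}$ and $p+1$ yields exactly $M_{-p,1/2}(a,b)$ as the pointwise lower bound on the integrand (sharp at the stated $P$). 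The Lipschitz-ness of $\tau$ (needed for change of variables) follows since $|F_{s(\theta)}|\ge1$ bounds $s'$, and $\partial_i M_{-p,1/2}\le 2^{1/p}$.

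The paper, by contrast, gives a one-paragraph direct argument with no layer-cake, transport, or H\"older: it sets $\mathfrak m=\max\{\max_x f(x)/\sum f,\ \max_x g(x)/\sum g\}$, assumes WLOG this is attained by $g$ at the origin, and then for each $x$ bounds $h(x)\ge M_{-p,1/2}(f(x),g(o))\ge\tfrac{f(x)}{\sum f}M_{-p,1/2}(\sum f,\sum g)$ using only monotonicity of $M_{-p,1/2}$ in its second argument (since $\mathfrak m\sum g\ge\tfrac{f(x)}{\sum f}\sum g$) and its homogeneity; summing over $x$ finishes. Your version illustrates nicely that the \Cref{liftprop} template degrades gracefully to the unequal-mass setting and returns the mean $M_{-p,1/2}$, but the paper's argument is much shorter and purely pointwise. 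One small quibble: in the degenerate case $\sum f=\infty$ (with $g\not\equiv0$), the claim $\sum h=\infty$ is correct only because $f$ is assumed real-valued (so $\sum f=\infty$ forces infinitely many nonzero terms); the parenthetical justification ``sum the hypothesis over translates of a point where the relevant function is positive'' could be spelled out a touch more, but the conclusion holds.
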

\begin{proof}[Proof]
Consider $\mathfrak{m}:=\max\left\{ \max_{x\in\mathbb{Z}^{d-1}}\frac{f(x)}{\sum f},\max_{x\in\mathbb{Z}^{d-1}}\frac{g(x)}{\sum g}\right\}$. Assume without loss of generality that this maximum $\mathfrak{m}$ is attained by $g$ at the origin $o$. Then lower bound $h$ by noting that 
\begin{align*}
h(x)&\geq M_{-p,1/2}(f(x),g(o))=M_{-p,1/2}\left(\frac{f(x)}{\sum f}\sum f,\mathfrak{m}\sum g\right)\\
&\geq M_{-p,1/2}\left(\frac{f(x)}{\sum f}\sum f,\frac{f(x)}{\sum f}\sum g\right)=\frac{f(x)}{\sum f}M_{-p,1/2}\left(\sum f,\sum g\right).\end{align*}
The conclusion follows when summing $h(x)$ over all $x\in \mathbb{Z}^{d-1}$.
\end{proof}

Second a convexity lemma.

\begin{lem}\label{betaconvex}
Consider non-increasing sequence $\beta_i$, so that $\sum_{i=1}^\infty \beta_i=1$ and $\sum_{i=1}^n \beta_i\leq 1-\alpha$, then $\sum_{i=1}^\infty M_{-p,1/2}(c,\beta_i)\geq \alpha M_{-p,1/2}\left(\frac{cn}{1-\alpha},1\right).$
\end{lem}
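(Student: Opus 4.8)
The plan is to reduce the claim to two elementary features of the single-variable function $\phi(\beta):=M_{-p,1/2}(c,\beta)$ for $\beta\ge 0$: that $\phi$ is non-decreasing, and that (since $p>0$) $\phi$ is concave on $[0,\infty)$ with $\phi(0)=0$. Concavity can be checked directly: writing $A:=\tfrac12 c^{-p}>0$ one has $\phi(\beta)=\big(A+\tfrac12\beta^{-p}\big)^{-1/p}$, and differentiating twice gives $\phi''(\beta)=-\tfrac{(1+p)A}{2}\,\beta^{-p-2}\big(A+\tfrac12\beta^{-p}\big)^{-(1+p)/p-1}<0$; alternatively one may simply invoke that the weighted $(-p)$-mean is a concave function of its arguments. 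I may assume $c>0$ and $0<\alpha<1$: if $\alpha=0$ the right-hand side is $0$, and $\alpha\ge 1$ is incompatible with $\sum_i\beta_i=1$ since it would force $\sum_{i\le n}\beta_i\le 0$ and hence $\beta_i\equiv 0$.

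First I would record that, with $s:=\sum_{i=1}^n\beta_i\le 1-\alpha$ and using $\beta_1\ge\cdots\ge\beta_n$, one has $s\ge n\beta_n$, so $0<\beta_n\le s/n\le (1-\alpha)/n$ (here $\beta_n>0$, for otherwise monotonicity forces $\beta_i=0$ for every $i\ge n$, contradicting $\sum_i\beta_i=1$). Next I would split the series at index $n$. For $i\le n$, monotonicity of $\phi$ together with $\beta_i\ge\beta_n$ gives $\phi(\beta_i)\ge\phi(\beta_n)$. For $i>n$ we have $0\le\beta_i\le\beta_n$, and concavity together with $\phi(0)=0$ yields the chord-from-the-origin bound $\phi(\beta_i)\ge\tfrac{\beta_i}{\beta_n}\phi(\beta_n)$. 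Summing both families,
\begin{align*}
\sum_{i=1}^\infty M_{-p,1/2}(c,\beta_i)\ \ge\ n\,\phi(\beta_n)+\frac{\phi(\beta_n)}{\beta_n}\sum_{i>n}\beta_i\ =\ \big(n\beta_n+(1-s)\big)\frac{\phi(\beta_n)}{\beta_n}.
\end{align*}

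Finally I would invoke the $1$-homogeneity of $M_{-p,1/2}$, namely $\phi(\beta_n)/\beta_n=M_{-p,1/2}(c/\beta_n,1)$, and the fact that $M_{-p,1/2}(\cdot,1)$ is increasing in its first entry. Since $\beta_n\le(1-\alpha)/n$ gives $c/\beta_n\ge cn/(1-\alpha)$, we get $M_{-p,1/2}(c/\beta_n,1)\ge M_{-p,1/2}\!\big(\tfrac{cn}{1-\alpha},1\big)$; and $n\beta_n+(1-s)\ge 1-s\ge\alpha$ because $s\le 1-\alpha$. Multiplying these two nonnegative lower bounds finishes the proof. The argument is short and I do not anticipate a real obstacle; the one step that needs the right idea is splitting the sum at index $n$ and using $\phi(\beta_i)\ge(\beta_i/\beta_n)\phi(\beta_n)$ on the tail, which is precisely what converts the mass constraint $\sum_{i>n}\beta_i\ge\alpha$ into the factor $\alpha$ in the conclusion, with homogeneity then matching the first argument $cn/(1-\alpha)$.
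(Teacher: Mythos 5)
Your proof is correct and follows essentially the same route as the paper: the crux is the bound $M_{-p,1/2}(c,\beta_i)\geq \beta_i\,M_{-p,1/2}\!\big(\tfrac{cn}{1-\alpha},1\big)$ on the tail $i>n$, which the paper gets directly from $1$-homogeneity and monotonicity of $M_{-p,1/2}(\cdot,1)$ together with $\beta_i\le(1-\alpha)/n$, and which you reach by the equivalent observation that concavity plus $\phi(0)=0$ makes $\phi(\beta)/\beta$ non-increasing. The only cosmetic difference is that you also keep the first $n$ terms, contributing an extra $n\beta_n$ that you then discard; the paper drops them from the start.
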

\begin{proof}
    Note for all $i\geq n$, that $\beta_i\leq \frac{1-\alpha}{n}$ so $M_{-p,1/2}(c,\beta_i)=\beta_i M_{-p,1/2}(c\beta_i^{-1},1)\geq \beta_iM_{-p,1/2}\left(\frac{cn}{1-\alpha},1\right) $. Since $\sum_{i=n+1}^\infty \beta_i\geq \alpha $, the claim follows.
\end{proof} 

With these in place, we're ready to prove \Cref{mainthm}.

\begin{proof}[Proof of \Cref{mainthm}]
Let $\eta=\eta_{d,\alpha,\epsilon}>0$ be some parameter chosen small in terms of $d$, $\alpha$ and $\epsilon$.
By \Cref{ruzsaBM}, we can find a $m=m_{d,\eta}$ so that $|A+B|^{1/d}\geq (1-\eta)|A|^{1/d}+|B|^{1/d}$ if $A$ is not contained in $m$ hyperplanes.
Finally, take $n=n_{d,p,\epsilon,\alpha}$ large in terms of $d, p$, $\alpha$, $\epsilon$, $\eta$, and $m$.

Consider the level sets of $f, g$, and $h$,
$$F_{t}:=\{x\in\mathbb{Z}^d: f(x)>t\}, G_{t}:=\{x\in\mathbb{Z}^d: g(x)>t\}, \text{ and } H_{t}:=\{x\in\mathbb{Z}^d: h(x)>t\}.$$
Let $s_f$ and $s_g$ be the maxima of $f$ and $g$ respectively, so that
$$\int_0^{s_f}|F_t|dt=\sum f\text{ and }\int_0^{s_g}|G_t|dt=\sum g.$$
Let $T:[0,s_g]\to[0,s_f]$ be the transport map satisfying $\frac{d}{dt} T(t)=\frac{|G_t|}{|F_{T(t)}|}$, so that for all $t'\in [0,s_g]$, we have $\int_0^{t'}|G_t|dt=\int_0^{T(t')}|F_t|dt$.
Let 
$$t_{m,g}:=\sup\{t>0: G_t \text{ not contained in $m$ hyperplanes}\},$$
so that for all $t< t_{m,g}$, we have
$$|F_{T(t)}+G_{t}|^{1/d}\geq |F_{T(t)}|^{1/d}+(1-\eta)|G_t|^{1/d}\geq (1-\eta)\left(|F_{T(t)}|^{1/d}+|G_t|^{1/d}\right).$$
Note that by the definition of $h$, we have 
$H_{M_{-p,1/2}(T(t),t)}\supset F_{T(t)}+G_t$, so that
\begin{align*}
\int_{0}^{M_{-p,1/2}(T(t_{m,g}),t_{m,g})} |H_t|dt
&=\int_{0}^{t_{m,g}} \left|H_{M_{-p,1/2}(T(t),t)}\right|\frac{d\left(M_{-p,1/2}(T(t),t)\right)}{dt} dt\\
&\geq (1-\eta)^d \int_{0}^{t_{m,g}} \left(|F_{T(t)}|^{1/d}+|G_t|^{1/d}\right)^d\frac{d\left(M_{-p,1/2}(T(t),t)\right)}{dt} dt\\
&\geq (1-d\eta) \int_0^{t_{m,g}} 2^d|G_t|dt,
\end{align*}
where in the last inequality, we have used \Cref{meanderiv}.
Hence, if $\int_0^{t_{m,g}}|G_t|dt\geq (1-\eta) \int_0^{s_{g}}|G_t|dt=\sum f$, then we can conclude here.

We may thus assume henceforth that 
$\int_{t_{m,g}}^{s_g}|G_t|dt\geq \eta \sum g$, which in particular implies that
$\sum_{ G_{t_{m,g}}}g\geq \eta \sum g$. Since, by definition of $t_{m,g}$, the set $G_{t_{m,g}}$ is contained in $m$ hyperplanes, we can find a hyperplane $H\subset\mathbb{R}^d$ so that $\sum_{ H\cap\mathbb{Z}^d}g\geq \frac{\eta}{m} \sum g$. We shall show that even just the part of $g$ restricted to $H$ will have large doubling with $f$. 

Consider the set of hyperplanes parallel to $H$ intersecting $\mathbb{Z}^d$, and list them in the order $H_1,H_2,\dots$, so that $\sum_{\mathbb{Z}^d\cap H_i} f$ is non-increasing in $i$. Note that by the non-degeneracy condition of $f$, we have that 
$$\sum_{i=1}^{t_{n,f}} \sum_{\mathbb{Z}^d\cap H_i} f< (1-2^{d-\frac{1}{p}}) \sum f.$$

By \Cref{trivialLB}, we find that in the hyperplane $H+H_i$,
$$\sum_{\mathbb{Z}^d\cap(H+H_i)}h\geq M_{-p,1/2}\left(\sum_{\mathbb{Z}^d\cap H_i} f,\sum_{\mathbb{Z}^d\cap H} g\right)\geq M_{-p,1/2}\left(\frac{\sum_{\mathbb{Z}^d\cap H_i} f}{\sum f},\frac{\eta}{m}\right) \sum f.$$
Hence, 
$$\sum h \geq  \sum f\cdot \sum_{i=1}^{\infty} M_{-p,1/2}\left(\frac{\sum_{\mathbb{Z}^d\cap H_i} f}{\sum_{\mathbb{Z}^d} f},\frac{\eta}{m}\right).$$
By \Cref{betaconvex}, this implies 
$$\sum h \geq 2^{d-\frac{1}{p}}  M_{-p,1/2}\left(\frac{\eta n}{m\left(1-2^{d-\frac{1}{p}}\right)},1\right) \sum f.$$
Note that $M_{-p,1/2}\left(L,1\right)\to 2^{1/p}$ as $L\to \infty$, so for $n$ is sufficiently large in terms of $m,\eta,\epsilon$, and $p$, we find that indeed
$$\sum h\geq (2^d-\epsilon) \sum f.$$
\end{proof}

\section*{Acknowledgments}
The author is grateful to Gautam Aishwarya and Francisco Marin Sola for pointing me to \cite{malliaris2025functional} and \cite{Cordero2025}. The author would like to additionally thank Takashi Satomi for pointing out the missing connectedness condition in Kemperman's theorem and its corollary.

\bibliographystyle{alpha}
\bibliography{references}

\newcommand{\etalchar}[1]{$^{#1}$}
\begin{thebibliography}{{\VAN{Hintum}{v}}HKT25}

\bibitem[AHZ17]{alexander2017discrete}
Matthew Alexander, Martin Henk, and Artem Zvavitch.
\newblock A discrete version of {Koldobsky’s} slicing inequality.
\newblock {\em Israel Journal of Mathematics}, 222(1):261--278, 2017.

\bibitem[AL25]{aishwarya2025entropy}
Gautam Aishwarya and Dongbin Li.
\newblock Entropic and functional forms of the dimensional {Brunn--Minkowski inequality in Gauss} space.
\newblock {\em Mathematische Annalen}, 393(3):3025--3042, 2025.

\bibitem[BH19]{berg2019discrete}
S{\"o}ren~Lennart Berg and Martin Henk.
\newblock Discrete analogues of {John’s} theorem.
\newblock {\em Moscow Journal of Combinatorics and Number Theory}, 8(4):367--378, 2019.

\bibitem[BL76]{brascamp1976extensions}
Herm~Jan Brascamp and Elliott~H Lieb.
\newblock On extensions of the {Brunn-Minkowski} and {Pr{\'e}kopa-Leindler} theorems, including inequalities for log concave functions, and with an application to the diffusion equation.
\newblock {\em Journal of functional analysis}, 22(4):366--389, 1976.

\bibitem[BL96]{bollobas1996sums}
B{\'e}la Bollob{\'a}s and Imre Leader.
\newblock Sums in the grid.
\newblock {\em Discrete Mathematics}, 162(1-3):31--48, 1996.

\bibitem[BMR{\etalchar{+}}20]{boroczky2020triangulations}
K{\'a}roly~J B{\"o}r{\"o}czky, M{\'a}t{\'e} Matolcsi, Imre~Z Ruzsa, Francisco Santos, and Oriol Serra.
\newblock Triangulations and a discrete {Brunn--Minkowski} inequality in the plane.
\newblock {\em Discrete \& Computational Geometry}, 64(2):396--426, 2020.

\bibitem[Bor75]{borell1975convex}
Christer Borell.
\newblock Convex set functions in $d$-space.
\newblock {\em Periodica Mathematica Hungarica}, 6(2):111--136, 1975.

\bibitem[CE25]{Cordero2025}
Dario Cordero-Erausquin.
\newblock On the equivalence between geometric and functional versions of {Brunn-Minkowski} inequalities.
\newblock {\em Manuscript}, 2025.

\bibitem[CEE25]{cordero2025concavity}
Dario Cordero-Erausquin and Alexandros Eskenazis.
\newblock Concavity principles for weighted marginals.
\newblock {\em arXiv preprint arXiv:2506.16941}, 2025.

\bibitem[CEMS01]{cordero2001riemannian}
Dario Cordero-Erausquin, Robert~J McCann, and Michael Schmuckenschl{\"a}ger.
\newblock A {Riemannian} interpolation inequality {\`a} la {Borell, Brascamp and Lieb}.
\newblock {\em Inventiones mathematicae}, 146(2):219--257, 2001.

\bibitem[CER23]{cordero2023improved}
Dario Cordero-Erausquin and Liran Rotem.
\newblock Improved log-concavity for rotationally invariant measures of symmetric convex sets.
\newblock {\em The Annals of Probability}, 51(3):987--1003, 2023.

\bibitem[CIN18]{cifre2018discrete}
Mar{\'\i}a A~Hern{\'a}ndez Cifre, David Iglesias, and Jes{\'u}s~Yepes Nicol{\'a}s.
\newblock On a discrete {Brunn-Minkowski} type inequality.
\newblock {\em SIAM J. Discret. Math.}, 32(3):1840--1856, 2018.

\bibitem[EM21]{eskenazis2021dimensional}
Alexandros Eskenazis and Georgios Moschidis.
\newblock The dimensional {Brunn--Minkowski} inequality in {Gauss} space.
\newblock {\em Journal of Functional Analysis}, 280(6):108914, 2021.

\bibitem[FH22]{freyer2022bounds}
Ansgar Freyer and Martin Henk.
\newblock Bounds on the lattice point enumerator via slices and projections.
\newblock {\em Discrete \& Computational Geometry}, 67(3):895--918, 2022.

\bibitem[FH24]{freyer2024polynomial}
Ansgar Freyer and Martin Henk.
\newblock Polynomial bounds in {Koldobsky’s} discrete slicing problem.
\newblock {\em Proceedings of the American Mathematical Society}, 152(07):3063--3074, 2024.

\bibitem[FJ21]{figalli2021sharp}
Alessio Figalli and David Jerison.
\newblock A sharp {Freiman} type estimate for semisums in two and three dimensional {Euclidean} spaces.
\newblock In {\em Annales scientifiques de l'{\'E}cole Normale Sup{\'e}rieure}, volume~54, pages 235--257. Soci{\'e}t{\'e} Math{\'e}matique de France, 2021.

\bibitem[FvHT25]{figalli2025sharp}
Alessio Figalli, Peter van Hintum, and Marius Tiba.
\newblock Sharp quantitative stability for the {Pr\'ekopa-Leindler} and {Borell-Brascamp-Lieb} inequalities.
\newblock {\em arXiv preprint arXiv:2501.04656}, 2025.

\bibitem[GG01]{gardner2001brunn}
Richard Gardner and Paolo Gronchi.
\newblock A {Brunn-Minkowski} inequality for the integer lattice.
\newblock {\em Transactions of the American Mathematical Society}, 353(10):3995--4024, 2001.

\bibitem[GMR{\etalchar{+}}22]{green2022weighted}
Ben Green, D{\'a}vid Matolcsi, Imre~Z Ruzsa, George Shakan, and Dmitrii Zhelezov.
\newblock A weighted {Pr{\'e}kopa--Leindler} inequality and sumsets with quasicubes.
\newblock In {\em Analysis at Large: Dedicated to the Life and Work of Jean Bourgain}, pages 125--129. Springer, 2022.

\bibitem[GRST21]{gozlan2021transport}
Nathael Gozlan, Cyril Roberto, Paul-Marie Samson, and Prasad Tetali.
\newblock Transport proofs of some discrete variants of the {Pr\'ekopa-Leindler} inequality.
\newblock {\em Annali della Scuola Normale Superiore di Pisa}, 22(3):844, 2021.

\bibitem[GZ10]{gardner2010gaussian}
Richard Gardner and Artem Zvavitch.
\newblock {Gaussian Brunn-Minkowski} inequalities.
\newblock {\em Transactions of the American Mathematical Society}, 362(10):5333--5353, 2010.

\bibitem[{\VAN{Hintum}{v}}HK24]{van2024sharp}
Peter {\VAN{Hintum}{v}}an~Hintum and Peter Keevash.
\newblock Sharp bounds for a discrete {John’s} theorem.
\newblock {\em Combinatorics, Probability and Computing}, 33(4):484--486, 2024.

\bibitem[{\VAN{Hintum}{v}}HK26]{van2026locality}
Peter {\VAN{Hintum}{v}}an~Hintum and Peter Keevash.
\newblock Locality in sumsets.
\newblock {\em Advances in Mathematics}, 485:110727, 2026.

\bibitem[{\VAN{Hintum}{v}}HKT25]{van2025ruzsa}
Peter {\VAN{Hintum}{v}}an~Hintum, Peter Keevash, and Marius Tiba.
\newblock On {Ruzsa’s} discrete {Brunn-Minkowski} conjecture.
\newblock {\em Proceedings of the American Mathematical Society}, 153(02):459--466, 2025.

\bibitem[{\VAN{Hintum}{v}}HST21]{van2021sharp}
Peter {\VAN{Hintum}{v}}an~Hintum, Hunter Spink, and Marius Tiba.
\newblock Sharp stability of {Brunn--Minkowski} for homothetic regions.
\newblock {\em Journal of the European Mathematical Society}, 24(12):4207--4223, 2021.

\bibitem[{\VAN{Hintum}{v}}HST23]{van2023sets}
Peter {\VAN{Hintum}{v}}an~Hintum, Hunter Spink, and Marius Tiba.
\newblock Sets in $\mathbb{Z}^k$ with doubling $2k+\delta$ are near convex progressions.
\newblock {\em Advances in Mathematics}, 413:108830, 2023.

\bibitem[HKS21]{halikias2021discrete}
Diana Halikias, Bo’az Klartag, and Boaz~A Slomka.
\newblock Discrete variants of {Brunn--Minkowski} type inequalities.
\newblock In {\em Annales de la Facult{\'e} des sciences de Toulouse: Math{\'e}matiques}, volume~30, pages 267--279, 2021.

\bibitem[IN20]{iglesias2020discrete}
David Iglesias and J~Yepes Nicol{\'a}s.
\newblock On discrete {Borell-Brascamp-Lieb} inequalities.
\newblock {\em Rev. Matem{\'a}tica Iberoamericana}, 36(3):711--722, 2020.

\bibitem[INZ20]{iglesias2020brunn}
David Iglesias, Jes{\'u}s~Yepes Nicol{\'a}s, and Artem Zvavitch.
\newblock {Brunn-Minkowski} type inequalities for the lattice point enumerator.
\newblock {\em Advances in Mathematics}, 370:107193, 2020.

\bibitem[Joh48]{fritz1948extremum}
Fritz John.
\newblock Extremum problems with inequalities as subsidiary conditions.
\newblock In {\em Studies and Essays, Presented to R. Courant on his 60th Birthday, Interscience, New York}, pages 187--204, 1948.

\bibitem[Kem64]{kemperman1964products}
Johannes Kemperman.
\newblock On products of sets in a locally compact group.
\newblock {\em Fundamenta Mathematicae}, 56(1):51--68, 1964.

\bibitem[KL19]{klartag2019poisson}
Bo’az Klartag and Joseph Lehec.
\newblock Poisson processes and a log-concave {Bernstein} theorem.
\newblock {\em Studia Mathematica}, 247:85--107, 2019.

\bibitem[KL25]{klartag2025affirmative}
Bo'az Klartag and Joseph Lehec.
\newblock Affirmative resolution of {Bourgain’s} slicing problem using {Guan’s} bound.
\newblock {\em Geometric and Functional Analysis}, pages 1--22, 2025.

\bibitem[Liv23]{livshyts2023universal}
Galyna Livshyts.
\newblock A universal bound in the dimensional {Brunn-Minkowski} inequality for log-concave measures.
\newblock {\em Transactions of the American Mathematical Society}, 376(09):6663--6680, 2023.

\bibitem[LMNZ17]{livshyts2017brunn}
Galyna Livshyts, Arnaud Marsiglietti, Piotr Nayar, and Artem Zvavitch.
\newblock On the {Brunn-Minkowski} inequality for general measures with applications to new isoperimetric-type inequalities.
\newblock {\em Transactions of the American Mathematical Society}, 369(12):8725--8742, 2017.

\bibitem[Mac24]{machado2024minimal}
Simon Machado.
\newblock Minimal doubling for small subsets in compact {Lie} groups.
\newblock {\em arXiv preprint arXiv:2401.14062}, 2024.

\bibitem[MM24]{marsiglietti2024geometric}
Arnaud Marsiglietti and James Melbourne.
\newblock Geometric and functional inequalities for log-concave probability sequences.
\newblock {\em Discrete \& Computational Geometry}, 71(2):556--586, 2024.

\bibitem[MMRR25]{malliaris2025functional}
Andreas Malliaris, James Melbourne, Cyril Roberto, and Michael Roysdon.
\newblock Functional liftings of restricted geometric inequalities.
\newblock {\em arXiv preprint arXiv:2508.15247}, 2025.

\bibitem[MRSZ22]{matolcsi2022analytic}
D{\'a}vid Matolcsi, Imre~Z Ruzsa, George Shakan, and Dmitrii Zhelezov.
\newblock An analytic approach to cardinalities of sumsets.
\newblock {\em Combinatorica}, 42(2):203--236, 2022.

\bibitem[OV12]{ollivier2012curved}
Yann Ollivier and C{\'e}dric Villani.
\newblock A curved {Brunn--Minkowski} inequality on the discrete hypercube, or: What is the {Ricci} curvature of the discrete hypercube?
\newblock {\em SIAM Journal on Discrete Mathematics}, 26(3):983--996, 2012.

\bibitem[Pr{\'e}71]{prekopa1971logarithmic}
Andr{\'a}s Pr{\'e}kopa.
\newblock Logarithmic concave measures with applications to stochastic programming.
\newblock 1971.

\bibitem[Reg16]{regev2016note}
Oded Regev.
\newblock A note on {Koldobsky's} lattice slicing inequality.
\newblock {\em arXiv preprint arXiv:1608.04945}, 2016.

\bibitem[RN18]{ritore2018}
Manuel Ritor{\'e} and Jes{\'u}s~Yepes Nicol{\'a}s.
\newblock {Brunn-Minkowski} inequalities in product metric measure spaces.
\newblock {\em Advances in Mathematics}, 325:824--863, 2018.

\bibitem[Ruz06]{ruzsa2006additive}
Imre~Z Ruzsa.
\newblock Additive combinatorics and geometry of numbers.
\newblock In {\em Proceedings of the International Congress of Mathematicians}, volume~3, pages 911--930, 2006.

\bibitem[Sar15]{saroglou2015remarks}
Christos Saroglou.
\newblock Remarks on the conjectured log-{Brunn--Minkowski} inequality.
\newblock {\em Geometriae Dedicata}, 177(1):353--365, 2015.

\bibitem[Sat22]{satomi2022inequality}
Takashi Satomi.
\newblock An inequality for the compositions of convex functions with convolutions and an alternative proof of the {Brunn--Minkowski--Kemperman} inequality.
\newblock {\em Proceedings of the Steklov Institute of Mathematics}, 319(1):265--282, 2022.

\bibitem[TV06]{tao2006additive}
Terence Tao and Van~H Vu.
\newblock {\em Additive combinatorics}, volume 105.
\newblock Cambridge University Press, 2006.

\bibitem[TV08]{tao2008john}
Terence Tao and Van Vu.
\newblock {John}-type theorems for generalized arithmetic progressions and iterated sumsets.
\newblock {\em Advances in Mathematics}, 219(2):428--449, 2008.

\end{thebibliography}

\end{document}